\newcommand{\lyxmathsym}[1]{\ifmmode\begingroup\def\b@ld{bold}
  \text{\ifx\math@version\b@ld\bfseries\fi#1}\endgroup\else#1\fi}
\providecommand{\tabularnewline}{\\}
\providecommand{\algorithmname}{Algorithm}
\begin{document}

\title{Distributed Multi-objective Multidisciplinary Design Optimization
algorithms}

\author{Amir Noori}

\institute{Islamic Azad University, Karaj Branch, \\
Moazen BLVD, Rajaee-shahr, \\
P.O. Box 31485-313, Karaj, Iran\\
Email: amir.noori@kiau.ac.ir}
\maketitle
\begin{abstract}
This work proposes multi-agent systems setting for concurrent engineering
system design optimization and gradually paves the way towards examining
graph theoretic constructs in the context of multidisciplinary design
optimization problem. This paper adapts a cooridination strategy based
on the well-known nearest neighbor rule and corresponding distributed
constrained optimization method. The flow of the algorithm can be
described as follow; generated estimates of the optimal (shared design)
variables are exchanged locally with neighbor subspaces and then updated
by computing a weighted sum of the local and received estimates. To
comply with the consistency requirement, the resultant values are
projected to local constraint sets. By employing the existing rules
and results of the field, it has shown that the dual task of reaching
consensus and asymptotic convergence of the algorithms to locally
and globally optimal and consistent designs can be achieved. Finally,
simulations are provided to illustrate the effectiveness and capability
of the presented framework.

\keywords{Multidisciplinary design optimization, Consensus algorithms, Projection
methods, Distributed optimization}
\end{abstract}

\section{INTRODUCTION}

Development and optimization of complex engineering systems arise
from the challenges of effectively addressing the competing needs
of improving performance, reducing costs, and enhancing safety. Modern
complex engineering systems are usually heterogeneous, highly interconnected
and mutually interdependent, both physically and through a multitude
of information and communication networks \citep{W.M.Haddad2006}.
Examples of such systems include automobile and rail vehicles design,
naval architecture, electronics, computers, and micro-electro-mechanical
systems (MEMS), as well as system of systems such as air and ground
traffic networks, distributed manufacturing environments, and globally
distributed supply networks. 

Design and optimization of these complex engineered systems are multidisciplinary
in nature, far from optimal, and heavily constrained by both technical
and nontechnical considerations. It is therefore impossible for one
designer, or even a single design team, to consider the entire problem
as a single design problem. Therefore, the design process is decentralized
or distributed over a number of design teams that autonomously operates
on a single component or aspect of the system. This paper is about
Collaborative Multidisciplinary Design Optimization, or CMDO, and
deals with concurrent optimization of two or more coupled analysis
disciplines with distributed computation models and numerical optimization. 

Multidisciplinary design optimization \citep{Alexandrov1995} allows
designers to incorporate all relevant disciplines simultaneously.
The simultaneous optimization is superior to the commonly used sequential
design methods, since it can exploit the interactions between the
disciplines. Design is both analysis and synthesis, and is compromise
in the balance of conflicting requirements. However, finding the best
compromise by including all disciplines simultaneously significantly
increases the complexity of the problem. 

During the past three decades, decomposition-based design optimization
strategies \citep{Sobieski1997} as a natural approach have drawn
a great deal of attention of researchers to solve the design problem
of complex systems in a distributed way. However, most works in such
decentralized design optimization settings address a hierarchical
or sequential evaluation of a master problem and some disciplinary
sub-problems. The optimal set of design variables for the upper level
becomes the objective/constraints responses for the lower level. At
each level, disciplines are optimized separately. Accordingly, two
general methods are elaborated in literature; single-level methods,
which have centralized decision-making authority and do not allow
design decisions to be made at the disciplinary level, and multi-level
methods, which a central master optimization problem is introduced
to coordinate the interactions between the disciplinary sub-problems.
In some multi-level methods, such distributed authority for decision
making is sometimes referred to as distributed design optimization
schemes. However, in this paper, by distributed methods we mean the
same decentralized decision making capability without any master or
central coordinator. 

However, several recently proposed distributed methods are placed
in the multi-level category. Typically, these methods hierarchically
decompose the underlying design problem into sub-systems along the
lines of systems, subsystems, and components or usually partition
in a non-hierarchical fashion along disciplinary lines. A review of
single-level and multi-level methods can be found in \citep{Cramer1994}
and \citep{Tosserams2009}. 

Several partitioning and coordination methods have been proposed for
Multidisciplinary Design Optimization (MDO) problem including linear
decomposition method (OLD) \citep{Sobieski1982}, Concurrent Subspace
Optimization (CSSO) \citep{Sobieski1988}, BLISS \citep{Sobieski2003},
analytical target cascading (ATC) \citep{Michelena1999}, Quasiseparable
decomposition (QSD) approach \citep{Haftka2005}, penalty decomposition
formulation \citep{Demiguel2006}, Augmented Lagrangian Decomposition
method for Quasi-separable problems \citep{Tosserams2007}, and Collaborative
Optimization (CO) \citep{Braun1996}, Enhanced Collaborative Optimization
(ECO) \citep{Roth2008}. A full description of decomposition and coordination
strategies in multidisciplinary design optimization problems is beyond
the scope of this paper and was already reported in \citep{Tosserams2009}.
As mentioned before, most of these approaches are essentially decentralized
in the way that they organize a fully centralized problem in a hierarchical
(i.e., sequential) structure. It is clear that we have reached the
limits of what these approaches can do \citep{Allen2011}. To proceed,
we need a more rigorous and deeper understanding of complex engineered
systems and how they should be designed. 

The penalty relaxation methods \citep{Michelena1999}, \citep{Blouin2005},
\citep{Tosserams2008}) relax the coupling constraints of MDO problems
to arrive at subproblems with separable constraint sets. ALC \citep{Tosserams2008}
provides a flexible coordination structure, not necessarily hierarchical
that uses penalty relaxation methods in tandem with algorithms for
solving systems of equations. The convergence to KKT points of the
original problems is the main advantage of penalty methods. Some penalty
relaxations methods have only been developed for quasi-separable problems
coupled through a set of coupling variables; coupling objectives and
constraints are not allowed. In \citep{Tosserams2006}, an Augmented
Lagrangian Relaxation for Analytical Target Cascading using the Alternating
Directions Method of Multipliers are proposed. In \citep{Tosserams2008},
a new penalty relaxation coordination method is proposed that can
be used to solve MDO problems with coupling variables, a coupling
objective, and coupling constraints. However, ALC results in a very
large number of consistency constraints; only a subset is actually
required to ensure consistency. On the other hand, such methods rely
on excessively large penalty factors for sufficiently accurate solutions.
Several consistency constraints allocation guidelines have been proposed
for ALC implementations in \citep{Allison2010}. 

Our work is also related to game-based design approaches \citep{Hernandez2002}
\citep{Xiao2005} \citep{Ciucci2012}. The pioneer work of Lewis et
al \citep{Lewis1997} in the last nineties suggested a game theoretic
approach to model interactions in multidisciplinary design. In \citep{Hernandez2002},
the authors investigated a quadratic and eigen-based formulation to
enhance convergence speed. They conclude that passing more information
generally leads to convergence to a Pareto-optimal set. 

This work is built upon the constrained consensus method \citep{Nedic2009}
\citep{Nedic2010}, adapts a distributed computation framework for
general complex system design and optimization problem and gradually
paves the way towards examining graph theoretic constructs in the
context of multidisciplinary design optimization problem. Convergence
properties of the agreement protocol can be proved using existing
result on algebraic graph theory \citep{Godsil2001}, in particular,
spectral properties of the underlying graph. 

This paper considers the design problem of complex engineering systems,
consisting of several (including both local and global) objectives,
design variables and constraints corresponding to different disciplines
and assigned to several teams. In other words, system-wide disciplines
(i.e., those are spread throughout the problem space) are also allowed.
In particular, these distributed multi-objective multidisciplinary
design optimization problem is presented in a multi-agent setting,
providing necessary information passing structure to come up with
an appropriate decision. In brief, the flow of the proposed algorithm
can be described as follow; generated estimates of shared design variables
and optimal linking variables (if any) are exchanged directly among
subspaces. Afterwards, each agent computes a weighted sum of its estimates
and received estimates. Then, each agent projects the variables received
from other agents to its constraint sets to maintain consistency. 

Analysis shows that the performance of the algorithm largely depends
on the subproblems as well as the network structure and communication
protocols. Afterwards, our main focus is on developing distributed
algorithms that guarantees asymptotic consensus on the shared and
linking variables while maintains the feasibility with respect to
the constraint sets. The proposed multi-agent framework is adapted
to flexibly address both system-level (global) and discipline-level
(local) issues, without any requirement of objectives and constraints
relaxation. Moreover, it has been shown that the proposed framework
is in great accordance with the corresponding design and development
teams. Finally, the main feature that distinguishes the proposed method
from others is its structural flexibility, design autonomy, and rigor
mathematical and graphical representation. We believe that the most
challenging part of the method (especially for general design optimization
problem) is the implementation and analysis of its nonlinear part;
i.e., projection algorithm. In addition, coupling between subspaces
might be another challenging issue and require further investigations
both in theory and in experiment. Attention is focused in this paper
on the general case, and some companion papers will be devoted to
special cases that allow further analysis. 

The rest of the paper is organized as follows. Section II is devoted
to provide the necessary mathematical and graph theoretical foundations
and computational algorithms used for incremental coordination and
projection. Section III introduces the main problem we consider, discusses
the assumptions made in the proposed model, and presents our algorithm.
Section IV presents the performance analysis and the convergence issues
of the algorithm, followed by section V that demonstrates the algorithm
and compares the results with that of the well-known All-in-once (AIO)
method. Section VI concludes the paper.

\section{PRELIMINARIES}

In this section, we discuss the standard consensus algorithm and the
constrained consensus algorithm. Some of the equations and ideas for
this section covering consensus, projection and constrained consensus
algorithms, originate from \citep{Nedic2010}, demonstrating the existing
strong theoretical foundation. They are included here to ensure a
complete statement of the problem at hand and an accurate comparison
between coordination strategies.

\subsection{Consensus Algorithms}

Distributed average consensus algorithms are a class of iterative
update schemes that work based on the neighbor interactions. In recent
years, there has been a surge of interests in distributed computing
methods based on the average consensus algorithms \citep{Jadbabaie2003}\citep{Olfati-Saber2004},
and it has found applications in rendezvous, formation control, flocking,
attitude alignment, decentralized task assignment, and sensor networks.
Let’s consider a network of $i$ agents, represented by $V=\left\{ 1,2,\ldots,n\right\} $.
The neighbors of node $i$ is a set of nodes $j\in V$ where communicating
with node $j$ through a directed link $e=(i,j)$. At each time $k+1$,
we assume that agent $i$ receives information $z_{j}(k)$ from neighboring
agents $j$ and updates its estimate by adding a weighted sum of the
local discrepancies, i.e., the differences between neighboring node
values and its own. In \citep{Olfati-Saber2004}, Olfati-saber and
Murray show that the following linear dynamic system:

\begin{eqnarray}
z_{i}(k+1) & = & z_{i}(k)+\underset{j\in N_{i}}{\sum}a_{ij}(k)(z_{j}(k)-z_{i}(k))\label{eq:avg_con}
\end{eqnarray}

where $a_{ij}(k)$ is a weight associated with the edge $(i,j),j\in N_{i}$
and $k=0,1,\lyxmathsym{…},$ solves a consensus problem. More precisely,
let $z_{k}$’s be $n$ constants, then with the set of initial states
$z_{i}(0)=z_{i}$, the state of all agents asymptotically converges
to the average value $z=\frac{1}{n}\underset{i}{\sum}z_{i}$ provided
that the network is connected.

\subsection{Projection}

Let $z$ be an element in a Hilbert space $H$ and let $Z$ be a closed
(possibly non-convex) subset of $H$. We use $P_{Z}[\bar{z}]$ to
denote the projection of a vector $\bar{z}$ onto a closed convex
set $Z$, and define as follows:

\begin{eqnarray*}
P_{Z}[\bar{z}] & = & \underset{z\in Z}{\arg\min}\left\Vert \bar{z}-z\right\Vert 
\end{eqnarray*}

There is always at least one such point for each $z$, namely where
$H$ is a finite dimensional Hilbert space. If $Z$ is convex as well
as closed then each $z$ has exactly one projection point $P_{Z}[z]$
\citep{Luenberger1969}.

\subsection{Constrained Consensus Algorithm}

The constrained consensus problem is to achieve asymptotic consensus
on the local decision variables, $z_{i}$, through information exchange
with the neighboring nodes in the presence of the constraint sets,
$z_{i}\in Z_{i}$. A distributed algorithm for this problem was proposed
in \citep{Nedic2010}. In the algorithm, $i^{th}$ local variable
$z_{i}(k)$ is executed as follows: 

\begin{equation}
z_{i}(k+1)=P_{Z_{i}}[\overset{m}{\underset{j=1}{\sum}}a_{ij}(k+1)z_{j}(k)]\label{eq:con_consensus}
\end{equation}

and it can also be written:

\begin{gather}
v_{i}(k+1)=z_{i}(k)+\overset{m}{\underset{j=1}{\sum}}a_{ij}(k+1)(z_{j}(k)-z_{i}(k))\\
z_{i}(k+1)=P_{Z_{i}}[v_{i}(k+1)]
\end{gather}

Illustration of the algorithm in 2D case is given in Figure \ref{fig:Constrained-consensus-algorithm}.
The constrained consensus algorithm has several advantages over alternating
projection method, the most significant being that concurrent computation
of subspaces is possible. Accordingly, in a distributed setting, internal
dynamics of subspaces are coupled together, cooperating (or may be
competing) to achieve the overall system objectives:

\begin{gather}
v_{i}(k+1)=(1-a_{ij}(k+1))z_{i}(k)+a_{ij}(k+1)z_{j}(k)\\
z_{i}(k+1)=P_{Z_{i}}[v_{i}(k+1)]
\end{gather}

where $i\neq j,i,j=1,2$.

\begin{figure}
\begin{centering}
\includegraphics[scale=0.25]{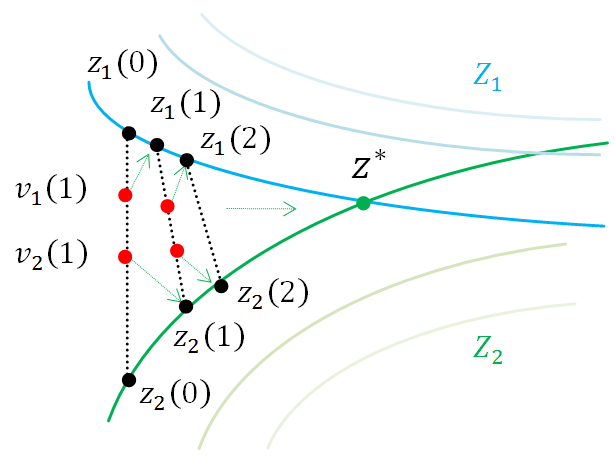}
\par\end{centering}

\caption{\label{fig:Constrained-consensus-algorithm}Constrained consensus
algorithm}
\end{figure}

\subsection{Constrained Optimization}

Distributed constrained optimization discusses the problem of optimizing
the sum of convex objective functions corresponding to $m$ connected
agents. The goal of the agents is to cooperatively solve the constrained
optimization problem 

\begin{align}
\min & \qquad\overset{m}{\underset{i=1}{\sum}}f_{i}\left(z\right)\label{eq:constrainedoptimization}\\
\text{subject to} & \qquad z\in\overset{m}{\underset{i=1}{\cap}}Z_{i}\nonumber 
\end{align}

where the local objective function of agent $i$, $f_{i}:\mathbb{R^{\mathrm{n}}\rightarrow\mathbb{R}}$,
is a convex function, and the local constraint set of agent $i$,
$Z_{i}\subseteq\mathbb{R^{\mathrm{n}}}$, is a closed convex set that
are known to agent only. In \citep{Nedic2010}, the following distributed
projected subgradient algorithm are proposed for solving problem (\ref{eq:constrainedoptimization})

\begin{align}
v_{i}\left(k\right)=\overset{m}{\underset{j=1}{\sum}} & a_{ij}\left(k\right)z_{j}\left(k\right)\label{eq:constrainedoptimizationalg0}\\
z_{i}\left(k+1\right)= & P_{Z_{i}}\left[v_{i}\left(k\right)-\alpha_{k}d_{i}\left(k\right)\right]\label{eq:constrainedoptimizationalg}
\end{align}

where the scalars $a_{ij}\left(k\right)$ are nonnegative weights
and the scalar $\alpha_{k}>0$ is a stepsize. The vector $d_{i}\left(k\right)$
is a subgradient of the agent local objective function $f_{i}\left(z\right)$
at $z=v_{i}\left(k\right)$.

\section{DISTRIBUTED MULTI-OBJECTIVE MULTI-DISCIPLINARY DESIGN OPTIMIZATION}

In this section we introduce the main problem we consider. We also
discuss the assumptions made in our model and propose our algorithm.
The general formulation for this section covering multidisciplinary
design optimization is in the spirit of the framework outlined in
design optimization literature \citep{Alexandrov1995}.

\subsection{MDO Problem Formulation }

The optimization problem usually encountered in many engineering system
design is considered to be a nonlinear programming problem. The general
multidisciplinary design optimization (MDO) problem is formulated
as follows: 

\begin{eqnarray}
 &  & \underset{z=\left(z^{l},z^{s}\right)}{\min}\quad f(x,y,z)\nonumber \\
 &  & \quad\text{s.t.}\quad\;\,\, g\left(x,y,z\right)\leq0\nonumber \\
 &  & \qquad\qquad h\left(x,y,z\right)=0\nonumber \\
 &  & \qquad\qquad\forall i,j\neq i,y_{i}=c_{ji}\left(x_{j},y_{j},z_{j}\right)\label{eq:generalMDO}\\
 &  & \qquad\qquad\forall i,x_{i}=T_{i}\left(x_{i},y_{i},z_{i}\right)\nonumber 
\end{eqnarray}

where 
\begin{verse}
$z=(z_{1},z_{2},\lyxmathsym{…},z_{m})$ denote design vector, consist
of design variables from different disciplines. Moreover, these variables
can be partitioned into shared and local design variables of ith subspace;
$z_{i}^{s}$ and $z_{i}^{l}$, respectively. 

$x=(x_{1},x_{2},\lyxmathsym{…},x_{m})$ is state vector and depend
on both linking and design variables. 

$y_{i}$’s denote linking variables and provide coupling among different
subsystems. 

$f,g$and $h$ are vector-valued objective function, inequality and
equality constraints functions, respectively. 

$c_{ji}:(x_{i}^{'},y_{i}^{'},z_{i}^{'})\rightarrow y_{j}^{''}$ denote
the coupling function from the subsystem $i$ to the subsystem $j$. 

$T_{i}:(x_{i}^{'},y_{i}^{'},z_{i}^{'})\rightarrow x_{i}^{''}$ denote
state transition functions that compute state variables of subspace
$i$. 
\end{verse}
We assume that the local objective function $f_{i}$ and the local
coupling $c_{ji}$ and transition $T_{i}$ functions are known to
agent only. Recall that $z^{*}$ is a local minimum of (\ref{eq:generalMDO})
if there exists $\varepsilon>0$ such that $f(x,y,z^{*})\leq f(x,y,z)$
for all $z\in S\bigcap B(z^{*},\varepsilon)$ and corresponding state
and coupling variables, where $S$ is the feasible region. 

The optimal design of complex engineering systems involves concurrent
optimization of several objectives, constrained by both local and
global issues. A distributed variant of equation (\ref{eq:generalMDO})
can be represented as follows: 

\begin{eqnarray}
 &  & \underset{z=\left(z^{i},z^{s}\right)}{\min}\quad\left[f_{0}(x,y,z);f_{1}(x,y,z);\ldots;f_{m}(x,y,z)\right]\label{eq:generalMMDO}\\
 &  & \quad\text{s.t.}\quad\;\,\, g_{i}\left(x,y,z\right)\leq0\qquad for\; all\; i\nonumber \\
 &  & \qquad\qquad h_{i}\left(x,y,z\right)=0\qquad for\; all\; i\nonumber \\
 &  & \qquad\qquad\forall i,j\neq i,y_{i}=c_{ji}\left(x_{j},y_{j},z_{j}\right)\nonumber \\
 &  & \qquad\qquad\forall i,x_{i}=T_{i}\left(x_{i},y_{i},z_{i}\right)\nonumber 
\end{eqnarray}

This problem can be formulated in a more compact form, which makes
it suitable for representation of distributed multi-objective design
optimization problems. Let $S$ denote the feasible region of (\ref{eq:generalMMDO}),
i.e., 

\begin{eqnarray*}
S_{i} & = & \left\{ (x^{'}\in X_{i},y^{'}\in Y_{i},z^{'}\in Z_{i}):\right.\\
 &  & \left.g_{i}(x^{'},y^{'},z^{'})\text{\ensuremath{\le}}0,h_{i}(x^{'},y^{'},z^{'})=0\right\} ,\, i=0,1,\text{…},m
\end{eqnarray*}

Accordingly, we use $\varphi_{i}$ to represent functional evolution
of the state and coupling variables of subspace $i$: 

\begin{eqnarray*}
\varphi_{i} & = & \left\{ (x_{i}\in X_{i},y_{i}\in Y_{i},z_{i}\in Z_{i}):x_{i}=T_{i}(x_{i},y_{i},z_{i}),\right.\\
 &  & \left.y_{i}=c_{ij}(x_{j},y_{j},z_{j}),i\text{\ensuremath{\neq}}j\right\} 
\end{eqnarray*}

and then let

\begin{eqnarray*}
S & = & \overset{m}{\underset{i=1}{\bigcap}}S_{i}
\end{eqnarray*}
\begin{eqnarray*}
\varphi & = & \overset{m}{\underset{i=1}{\cup}}\varphi_{i}
\end{eqnarray*}
. 

Then the problem (\ref{eq:generalMMDO}) can be written:

\begin{eqnarray}
 &  & \underset{z\in Z}{\min}F\left(x,y,z\right)=\left[f_{0}(x,y,z);f_{1}(x,y,z);\ldots;f_{m}(x,y,z)\right]\nonumber \\
 &  & \varphi\left(x,y,z\right)\in S
\end{eqnarray}

The goal of the agents is to cooperatively optimize both local and
global objectives: 

\begin{eqnarray*}
f(x) & = & f_{0}(x)+\underset{i=1}{\overset{m}{\sum}}f_{i}(x)
\end{eqnarray*}

\subsection{Distributed Computation Model }

In multiobjective multidisciplinary design problem, interconnections
and interactions among subspaces are usually complex. In such complex
problems, information- passing among subspaces and structural organization
of the subproblems is as important as subspaces optimization. In the
following, we discuss the proposed distributed computation model from
a graph theoretic perspective to handle information passing among
subproblems. Moreover, an analysis section is devoted to discuss basic
assumptions and corresponding results from algebraic graph theory,
which provides analytical framework, required for analysis of coordination
strategy and communication protocols. A detailed discussion of this
topic is beyond the scope of this paper and a brief overview of the
subject is given. For further information on this topic, readers are
referred to \citep{Nedic2010}. Recall we represent the structure
of our design optimization problem, consists of m subproblems by undirected
graph $G=(V,E)$. A typical structure of the design problem is illustrated
in Figure \ref{fig:Digraph-represent-structure}. 

\begin{figure}
\begin{centering}
\includegraphics[scale=0.25]{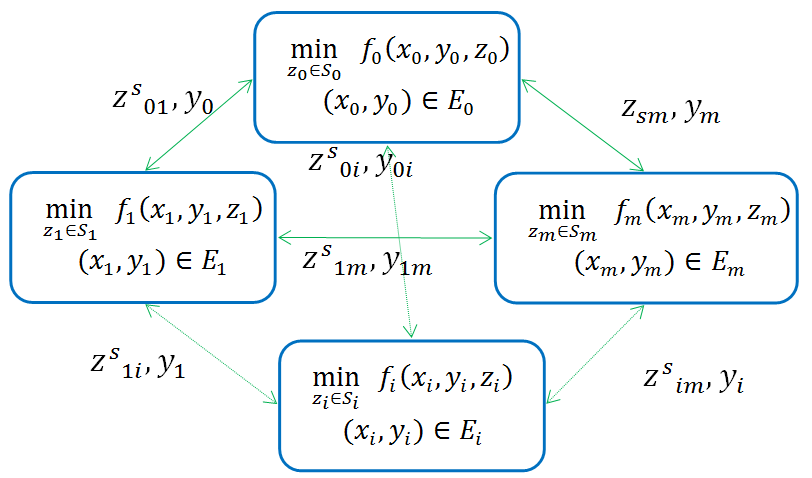}
\par\end{centering}

\caption{\label{fig:Digraph-represent-structure}Digraph represent structure
of design optimization problem}
\end{figure}

In MDO problems, there are at least two different types of information
being exchanged; shared design variables and coupled variables. Figure
\ref{fig:Flow-of-information;} illustrates a typical flow of information
among subspaces. 

\begin{figure}
\begin{centering}
\includegraphics[scale=0.2]{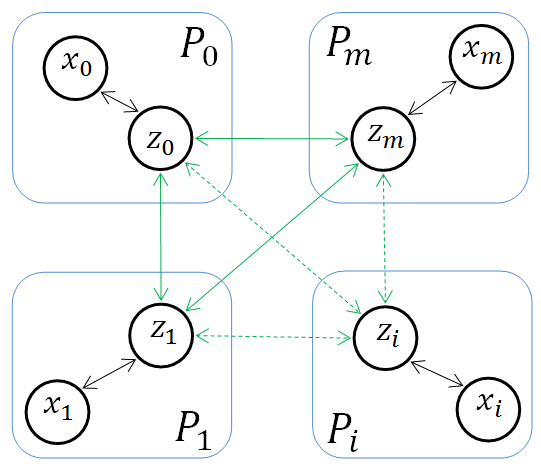} \enskip{}\includegraphics[scale=0.2]{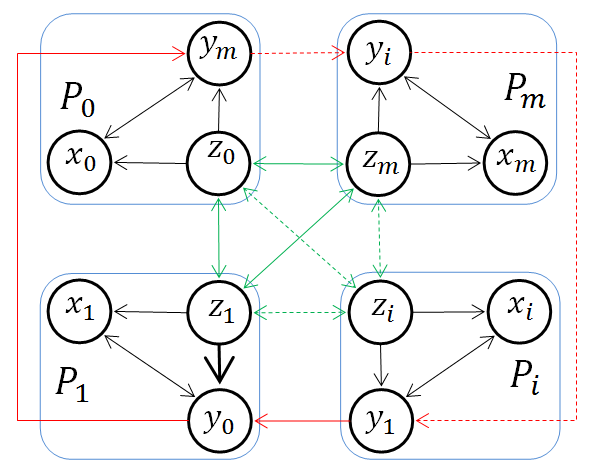}
\par\end{centering}

\caption{\label{fig:Flow-of-information;}A typical structure for information-passing;
(left) Distributed constrained optimization, (right) Distributed design
optimization; shared variables (green), coupling variables (red),
and state variables (black)}
\end{figure}

In addition, communication protocols play a critical role in providing
effective information sharing between several design teams. A communication
protocol consists of a set of rules which govern the orderly exchange
of information among entities. Two general classes of protocols are
proposed; synchronous and asynchronous protocols. Gossip communication
protocol of \citep{kempe2003} and the broadcast protocol of \citep{Aysal2009}
are some examples of these protocols. For a detailed review of this
topic see \citep{Aysal2009}.

\subsection{Collaborative Multidisciplinary Design Optimization (CMDO) algorithm }

We next consider the problem of general multi-objective multi-disciplinary
distributed design optimization (M2DO) (\ref{eq:generalMMDO}) corresponding
to m agents connected over a fixed topology. We also introduce two
different implementations of the proposed method for solving this
problem. Recall the local design variable associated with subspace
$i$ is denoted by $z_{i}(k)$ and is restricted to lie in a local
constraint set $Z_{i}$. In the multi-step version of the algorithm,
each agent $i$ starts with an initial estimate $z_{i}(0)\in Z_{i}$
and update its estimate by combining the estimates received from its
neighbors, by taking an optimization step to minimize its objective
function. Then she/he exchanges this estimate with neighbor subspaces.
Finally, the resulting value is projected into local feasible set.
This algorithm illustrates the intuition behind the proposed method
and allows employing heuristics in projection and optimization steps.
In interleaved version, the optimization step is augmented in the
constrained consensus algorithm. We consider this formulation when
discussing performance of the algorithm.

\begin{algorithm}
\begin{description}
\item [{\label{Algorithm-1:-Multi-step}Algorithm}] 1: Multi-step version
This algorithm is sequentially executed in three steps as follows: 
\item [{\textit{Update}}] Let’s consider the initial estimate of the local
and shared design variables $z_{i}^{l}(0)\in Z_{i}^{l},z_{i}^{s}(0)\in Z_{i}^{s}$,
respectively. Hereinafter, for the sake of notational simplicity,
we omit the super-index $s$ in $z_{i}^{s}$. These estimates $z_{i}$,
are exchanged directly among neighbor subspaces according to the problem
structure. Each agent updates its estimate by forming a convex combination
of the local estimate of shared design variable and that of neighbor
subspaces. This update mechanism at time $t_{k}$ is formally stated
by the following equation: 
\end{description}
\begin{eqnarray}
z_{i}(k+1) & = & z_{i}(k)+\alpha_{i}(k)\overset{m}{\underset{j=1}{\sum}}a_{ij}(k+1)(z_{j}(k)-z_{i}(k))
\end{eqnarray}

where $a_{ij}\left(k\right)$determines information passing among
subspaces. The coupling and state variables are also updated as follow: 

\begin{eqnarray}
i,j\neq i,y_{i} & = & c_{ji}(x_{j},y_{j},z_{j})\\
 &  & x_{i}=X_{i}(y_{i},z_{i})
\end{eqnarray}

At the end of this step, agents share their estimate of shared values
and coupling variables among each other.
\begin{description}
\item [{\textit{Optimization}}] Afterwards, each agent sets the updated
shared design variable, $z_{i}(k+1)$ as an initial value, updates
his/her objective based on the recent local state variables and received
coupling variables and then takes an optimization step according to
the following optimization problem $\underset{z_{i}}{\min}f_{i}(x_{i},y_{i},z_{i})$.
Let’s denote the resulting optimal design variables at time $t_{(k+1)}$
by $z_{i}^{*}(k+1)$. 
\item [{\textit{Projection}}] The last optimization step may result in
values outside the feasible region of each subspace. Therefore, we
project the resultant value into the feasible region to satisfy constraints
\begin{eqnarray}
z_{i}(k+1) & = & P_{S_{i}}[z_{i}^{*}(k+1)]
\end{eqnarray}

\end{description}
\caption{Multi-Step Algorithm}
\end{algorithm}

Although there are a lot of choices for projection or other steps,
but we do not discuss it in this paper. However, the main advantage
of this algorithm lies in the fact that it is possible to independently
specify each step. 

\begin{algorithm}
\begin{description}
\item [{Algorithm}] 2: In this algorithm, the optimization step is augmented
in the constrained consensus algorithm (\ref{eq:con_consensus}) as
follow
\end{description}
\begin{eqnarray}
 &  & z_{i}\left(k+1\right)=P_{S_{i}}\left[\alpha_{i}\left(k\right)\overset{m}{\underset{i=1}{\sum}}a_{ij}\left(k\right)z_{j}\left(k\right)-\gamma_{i}\left(k\right)d_{i}\left(k\right)\right]\label{eq:alg2}
\end{eqnarray}

Where the vector $d_{i}(k+1)$ is an optimization step of the objective
function of subspace $i$, i.e., $f_{i}$ at point $(x_{i}(k),y_{i}(k),z_{i}(k))$,
updated at each time step $t_{k}$.

\caption{\label{alg:Interleaved-Algorithm}Interleaved Algorithm}
\end{algorithm}

An illustration of the algorithm is presented in Figure \ref{fig:Illustration-of-the}.
In the following sections, we discuss the behavior of this algorithm. 

\begin{figure}
\begin{centering}
\includegraphics[scale=0.25]{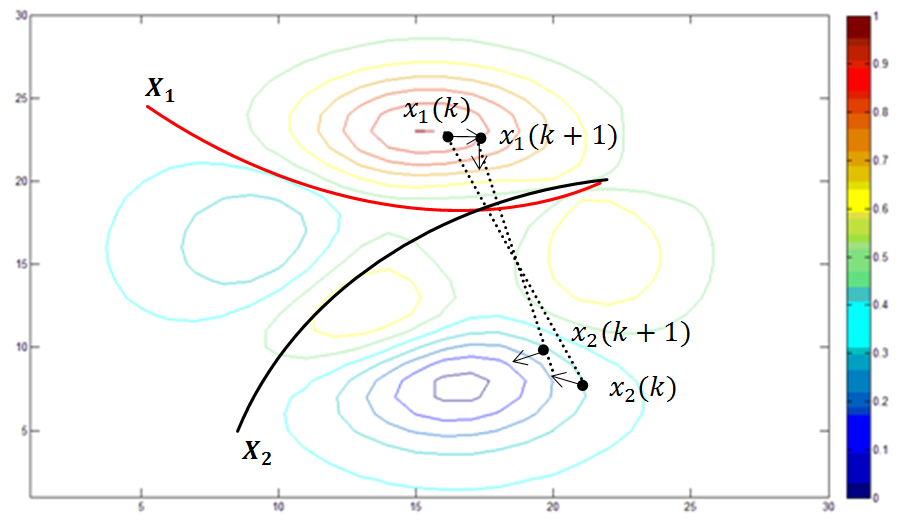}
\par\end{centering}

\caption{\label{fig:Illustration-of-the}Illustration of the proposed algorithm}
\end{figure}

\section{PERFORMANCE ANALYSIS}

We next discuss the behavior of the proposed algorithm. Analysis of
the general MDO problem is difficult because of the coupling among
subspaces and internal state transition. In this paper, we focus on
some special cases including quadratic objective functions with linear
subspaces. Firstly, some existing results in the field are discussed.
Then, a class of multidisciplinary design optimization problems are
defined and some extra assumptions are placed on information structure
and coordination strategy that can guarantee the convergence of the
algorithm.

\subsection{Constrained Consensus }

This section provides a summary description of the existing convergence
results for distributed optimization. We adopt the following assumptions
in our analysis following Blondel et al. \citep{Blondel2005} and
\citep{Nedic2010}. 

Assumption1: (\textit{Network Topology}) We assume information passing
among subspaces takes place at discrete time steps, $t_{k}$. Recall
the weight associated with the edges in network graph can be denoted
by $a_{ij}(k)$, or $[A(k)]_{ij}$, where $A(k)$ is called the \textit{network
weight matrix}. 

Assumption2: (\textit{Weights Rule}) There exists a scalar $\eta$
with $0<\eta<1$ such that for all $i,j\in\left\{ 1,...,m\right\} $,
and $k\ge0$
\begin{enumerate}
\item $a_{ii}(k)\ge\eta$
\item $a_{ij}(k)\ge\eta$ when subspace j communicates with subspace $i$,
and $a_{ij}(k)=0$ otherwise. 
\item $\overset{m}{\underset{i=1}{\sum}}a_{ij}(k)=1$(\textit{row stochastic}) 
\end{enumerate}
Assumption3: (\textit{Connectivity and symmetry}) The graph $(V,E_{\infty})$
is connected, where $E_{\infty}$ is the set of edges $(j,i)$ representing
subspace pairs communicating directly infinitely many times, i.e.,
\begin{eqnarray*}
E_{\infty} & = & \left\{ (j,i)|(j,i)\in E_{k}\textrm{ for infinitely many indices }k\right\} 
\end{eqnarray*}

Moreover, the graph $(V,E_{\infty})$ is symmetric, i.e., the weights
satisfy $a_{ij}(k)=a_{ji}(k)$ for all $i,j$. 

Assumption4: (\textit{Bounded intercommunication intervals}) If $i$
communicates to $j$ an infinite number of times {[}that is, if $(i,j)\in E(t)$
infinitely often{]}, then there is some $B$ such that, for all $t$,

\begin{eqnarray*}
(i,j) & \in & E(t)\cup E(t+1)\cup\cdots\cup E(t+B-1)
\end{eqnarray*}
. 

Assumption5:(\textit{Doubly Stochasticity}) The vectors 

$a_{i}(k)=(a_{i1}(k),\lyxmathsym{…},a_{im}(k))^{'}$ satisfy: 

a) $a_{i}(k)\ge0$ and $\overset{m}{\underset{i=1}{\sum}}a_{ij}(k)=1$
for all $i$ and $k$, i.e., the vector $a_{i}(k)$. 

b) $\overset{m}{\underset{i=1}{\sum}}a_{ij}(k)=1$ for all $j$ and
$k$. 
\begin{remark}
This assumption establishes that each agent takes a convex combination
of its estimate and the estimates of its neighbors. Moreover, second
part of this assumption together with Assumption 2, ensures that the
estimate of every agent is influenced by the estimates of every other
agent with the same frequency in the limit, i.e., all agents are equally
influential in the long run \citep{Nedic2010}. \end{remark}
\begin{lemma}
\citep{olfati2007} Let $G$ be a connected undirected graph. Then,
the algorithm (\ref{eq:avg_con}) asymptotically solves an average
consensus problem for all initial states. \end{lemma}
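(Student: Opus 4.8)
The plan is to recast the update (\ref{eq:avg_con}) in matrix form and then read off convergence from the spectrum of the associated weight matrix. Collecting the scalar states into $z(k)=(z_1(k),\ldots,z_n(k))^{T}$ and taking the (constant, symmetric) edge weights $a_{ij}$, the iteration becomes $z(k+1)=Pz(k)$ with $P=I-L$, where $L$ is the weighted graph Laplacian, $L_{ij}=-a_{ij}$ for $j\in N_i$ and $L_{ii}=\sum_{j\in N_i}a_{ij}$. First I would verify that $P$ is symmetric and doubly stochastic: row-stochasticity follows because each row of $L$ sums to zero, and symmetry of the undirected weights then gives column-stochasticity. The immediate consequence is that $\mathbf{1}^{T}z(k+1)=\mathbf{1}^{T}z(k)$, so the average $\bar z=\tfrac{1}{n}\mathbf{1}^{T}z(0)$ is invariant along the trajectory; whatever limit is reached must carry this average.

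Next I would localize the eigenvalues of $P$ using the spectral properties of $L$ from algebraic graph theory. Since $L$ is symmetric positive semidefinite, its eigenvalues are $0=\lambda_1\le\lambda_2\le\cdots\le\lambda_n$, and connectedness of $G$ forces the algebraic connectivity $\lambda_2>0$, so the zero eigenvalue is simple with eigenvector $\mathbf{1}$. Hence $P$ has eigenvalue $1$ simple with eigenvector $\mathbf{1}$, and its remaining eigenvalues are $1-\lambda_i$ for $i\ge2$. Convergence then reduces to showing these lie strictly inside the unit circle, i.e. $0<\lambda_i<2$: the lower bound is $\lambda_2>0$, and the upper bound is secured by choosing the weights small enough (for instance $a_{ij}<1/\Delta$ with $\Delta$ the maximum degree, or more sharply $\lambda_n<2$), consistent with the nearest-neighbor weighting used here.

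With the spectrum controlled, convergence is immediate. I would decompose the initial state as $z(0)=\bar z\,\mathbf{1}+w$ with $w\perp\mathbf{1}$, so that $z(k)=P^{k}z(0)=\bar z\,\mathbf{1}+P^{k}w$, using $P\mathbf{1}=\mathbf{1}$. Because the restriction of $P$ to the invariant subspace $\mathbf{1}^{\perp}$ has spectral radius $\rho=\max_{i\ge2}|1-\lambda_i|<1$, we get $\|P^{k}w\|\le\rho^{k}\|w\|\to0$, whence $z(k)\to\bar z\,\mathbf{1}$; that is, every coordinate converges to the average, geometrically at rate $\rho$. An equivalent and perhaps more transparent route, better aligned with the paper's Lyapunov/graph-theoretic tone, is to take the disagreement function $V(k)=\tfrac12\|z(k)-\bar z\,\mathbf{1}\|^{2}$ and show $V(k+1)\le\rho^{2}V(k)$ directly from $z(k+1)-\bar z\,\mathbf{1}=P(z(k)-\bar z\,\mathbf{1})$.

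The main obstacle is the upper eigenvalue bound, i.e. ruling out a dominant eigenvalue near $-1$ arising from $1-\lambda_n$: this is precisely where the choice of weights (not merely connectivity) enters, and it is the one place where ``connected graph'' alone is insufficient without the implicit smallness of the $a_{ij}$ guaranteed by the weights rule. If one instead wished to treat the genuinely time-varying weights $a_{ij}(k)$ appearing in (\ref{eq:avg_con}), the clean spectral factorization would no longer apply, and I would fall back on a products-of-stochastic-matrices argument (scrambling coefficients, Wolfowitz-type results) under the connectivity and bounded-intercommunication assumptions; for the fixed connected graph of this lemma, however, the eigenvalue argument above is the most direct.
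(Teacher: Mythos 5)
The paper itself never proves this lemma: it is imported, with citation, from \citep{olfati2007}, and the only internal commentary is the informal remark that follows it. So there is no in-paper proof to compare against; the relevant comparison is with the cited reference, and your spectral argument is in substance exactly the proof given there for the fixed-weight, discrete-time case (Perron matrix $P=I-L$, doubly stochastic and symmetric, simple eigenvalue $1$ with eigenvector $\mathbf{1}$ by connectedness, remaining spectrum strictly inside the unit disk, geometric decay of the disagreement component while the average $\tfrac{1}{n}\mathbf{1}^{T}z(k)$ is conserved). Your proof is correct, and two of your side observations deserve emphasis. First, you are right that connectedness alone does not suffice in discrete time: one needs the row sums $\sum_{j\in N_i}a_{ij}<1$ (e.g.\ $a_{ij}\le\epsilon<1/\Delta$) both to make $P$ a genuine nonnegative stochastic matrix and to rule out an eigenvalue at or below $-1$ (a bipartite-type oscillation); the lemma as restated in this paper silently drops this hypothesis, which in the original reference is the explicit step-size condition $0<\epsilon<1/\Delta$ and which here is only partially recovered by the later Weights Rule assumption ($a_{ii}(k)\ge\eta$ with row stochasticity) --- an assumption that is, moreover, phrased for the stochastic-matrix form (\ref{eq:con_consensus}) rather than the Laplacian form (\ref{eq:avg_con}). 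Second, equation (\ref{eq:avg_con}) as written carries time-varying weights $a_{ij}(k)$, so your fixed-matrix power argument strictly covers only the time-invariant case; your closing remark that the time-varying statement requires a products-of-stochastic-matrices argument under connectivity and bounded-intercommunication assumptions is the honest and correct way to bridge that gap, and is indeed how the general case is handled in the consensus literature the paper relies on.
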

\begin{remark}
Simply, this lemma states that if there exist some paths for flow
of information, consensus is eventually achieved. Therefore, information
exchange among different subspaces with the same shared design variables
are necessary for convergence. \end{remark}
\begin{proposition}
\label{prop:pro-conscon}\citep{Nedic2010} (\textit{Consensus}) Let
the set $Z=\overset{m}{\underset{i=1}{\cap}}Z_{i}$ be nonempty. Also,
let Weights Rule, Doubly Stochasticity, Connectivity, and Information
Exchange Assumptions hold (cf. \textit{Assumptions} 2, 3, 4, and 5).
For all $i$, let the sequence $\left\{ z_{i}(k)\right\} $ be generated
by the constrained consensus algorithm (\ref{eq:con_consensus}).
We then have for some $\tilde{z}\in Z$ and all $i$,$\underset{k\rightarrow\infty}{\lim}\left\Vert z_{i}\left(k\right)-\tilde{z}\right\Vert =0$. 
\end{proposition}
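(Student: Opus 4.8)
The plan is to prove the two conclusions of the proposition in tandem: an \emph{agreement} part, showing that the disagreement $\|z_i(k)-z_j(k)\|$ between any two agents vanishes, and a \emph{feasibility} part, showing that the common limit lies in $Z=\cap_i Z_i$. The single tool underlying everything is the basic inequality for Euclidean projection onto a closed convex set: for any $w$ and any $y\in Z_i$,
\[
\|P_{Z_i}[w]-y\|^2 \le \|w-y\|^2 - \|P_{Z_i}[w]-w\|^2.
\]
This simultaneously controls how close the iterate stays to feasible points and how large the projection correction $e_i(k):=z_i(k+1)-v_i(k)$ can be, where $v_i(k)=\sum_j a_{ij}(k+1)z_j(k)$ is the consensus average preceding the projection in (\ref{eq:con_consensus}).

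First I would set up the transition-matrix machinery. Writing $A(k)=[a_{ij}(k)]$ and $\Phi(k,s)=A(k)\cdots A(s+1)$, Assumptions 2--5 (the uniform lower bound $\eta$ on weights, row- and column-stochasticity, connectivity of $(V,E_\infty)$, and bounded intercommunication intervals) guarantee, by the standard estimate of \citet{Nedic2010}, that $[\Phi(k,s)]_{ij}\to 1/m$ geometrically and uniformly as $k\to\infty$. Unrolling the recursion $z_i(k+1)=\sum_j a_{ij}(k+1)z_j(k)+e_i(k)$ then expresses each $z_i(k)$ as a $\Phi$-weighted combination of initial values plus an accumulation of past projection errors $e_j(\cdot)$.

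Next comes the decrease estimate. Fixing any $\tilde{y}\in Z$ (nonempty by hypothesis), summing the projection inequality over $i$ and using row-stochasticity together with convexity of the squared norm, and then using column-stochasticity to bound $\sum_i\|v_i(k)-\tilde{y}\|^2\le\sum_i\|z_i(k)-\tilde{y}\|^2$, I obtain
\[
\sum_i\|z_i(k+1)-\tilde{y}\|^2 \le \sum_i\|z_i(k)-\tilde{y}\|^2 - \sum_i\|e_i(k)\|^2.
\]
Hence $\sum_i\|z_i(k)-\tilde{y}\|^2$ is nonincreasing (so convergent, and the iterates bounded), and the errors are summable, giving $\|e_i(k)\|\to 0$ for every $i$. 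Feeding the vanishing, summable errors through the geometrically mixing products $\Phi(k,s)$ yields the agreement conclusion $\|z_i(k)-\bar{z}(k)\|\to 0$, where $\bar{z}(k)=\frac1m\sum_i z_i(k)$ evolves only through the small errors by column-stochasticity. Finally, to produce the common limit I would extract a convergent subsequence $z_i(k_\ell)\to\tilde{z}$ (possible by boundedness); agreement forces all agents to share this limit, and since each $z_i(k)\in Z_i$ with $Z_i$ closed, $\tilde{z}\in Z_i$ for every $i$, i.e. $\tilde{z}\in Z$. Applying the decrease estimate now with $\tilde{y}=\tilde{z}$ shows $\sum_i\|z_i(k)-\tilde{z}\|^2$ converges, and since it vanishes along $\{k_\ell\}$ it vanishes along the whole sequence, completing the proof.

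I expect the main obstacle to be the agreement step: one must show that the accumulated projection errors, propagated through the infinite products of weight matrices, do not prevent the agents from reaching consensus. This rests delicately on the geometric convergence $[\Phi(k,s)]_{ij}\to 1/m$, which is exactly where connectivity, bounded intercommunication intervals, and the uniform weight bound $\eta$ are all indispensable; the bookkeeping that pairs the decaying errors against the mixing rate is the most technical part of the argument.
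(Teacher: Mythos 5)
Your proof is correct, but note that the paper itself contains no proof of this proposition: it is imported directly from \citet{Nedic2010}, and your argument is essentially a faithful reconstruction of the proof given there. The chain you describe --- the strengthened projection inequality, the Fej\'er-type decrease of $\sum_i\left\Vert z_i(k)-\tilde{y}\right\Vert ^2$ under doubly stochastic weights, square-summability of the projection errors $e_i(k)$, geometric mixing of the transition matrices $\Phi(k,s)$ to force agreement, and the closedness/subsequence argument that places the common limit in $Z$ and then upgrades subsequential convergence to full convergence via monotonicity --- is exactly the route taken in that reference, so there is nothing to contrast with the paper beyond the fact that the paper delegates the proof to the citation.
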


\subsection{Distributed Constrained Optimization}

In this section, we discuss the existing results for the constrained
optimization problem. Let's first consider the following assumptions.

Assumption 6: (\textit{Same Constraint Set})
\begin{itemize}
\item The constraint sets $Z_{i}$ are the same, i.e, $Z_{i}=Z$ for a closed
convex set $Z$. 
\item The subgradient sets of each $f_{i}$ are bounded over the set $Z$,
i.e., there is a scalar $L>0$ such that for all $i$
\begin{align*}
\left\Vert d\right\Vert \leq L & \text{ for all }d\in\partial f_{i}\left(z\right)\text{ and all }z\in Z
\end{align*}

\end{itemize}
Assumption 7: (\textit{Compactness}) For each $i$, the local constraint
set $Z_{i}$ is a compact set, i.e., there exists a scalar $B>0$
such that

\begin{align*}
\left\Vert z\right\Vert \leq B & \text{ for all }z\in Z_{i}\text{ and all }i
\end{align*}

The next proposition presents convergence result for the same constraint
set case. In particular, it is shown that the iterates of the projected
subgradient algorithm (\ref{eq:constrainedoptimizationalg0})-(\ref{eq:constrainedoptimizationalg})
converge to an optimal solution when we use a stepsize converging
to zero fast enough.
\begin{proposition}
\label{prop:SameConPro}\citep{Nedic2010} Let Weights Rule, Doubly
Stochasticity, Connectivity, Information Exchange, and Same Constraint
Set Assumptions hold (cf. Assumptions 2, 3, 4, 5, and 6). Let $\left\{ z_{i}(k)\right\} $
be the iterates generated by the algorithm (\ref{eq:constrainedoptimizationalg0})-(\ref{eq:constrainedoptimizationalg})
with the stepsize satisfying $\sum_{k}\alpha_{k}=\infty$ and $\sum_{k}\alpha_{k}^{2}<\infty$.
In addition, assume that the optimal solution set $Z^{*}$is nonempty.
Then, there exists an optimal point $z^{*}\in Z^{*}$ such that

\begin{eqnarray*}
\underset{k\rightarrow\infty}{\lim} & \left\Vert z_{i}\left(k\right)-z^{*}\right\Vert  & =0\text{ for all }i
\end{eqnarray*}

\end{proposition}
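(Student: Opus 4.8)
The plan is to recast the distributed recursion (\ref{eq:constrainedoptimizationalg0})--(\ref{eq:constrainedoptimizationalg}) as an approximate \emph{centralized} projected subgradient step on the running average $\bar{z}(k)=\frac{1}{m}\sum_{j=1}^{m}z_{j}(k)$, and then to close the argument with a deterministic supermartingale-type convergence lemma. First I would fix an arbitrary optimal point $z^{*}\in Z^{*}$. Since $Z_{i}=Z$ for all $i$ and $z^{*}\in Z$, nonexpansiveness of the Euclidean projection onto the closed convex set $Z$ gives, for each agent,
\begin{equation*}
\|z_{i}(k+1)-z^{*}\|^{2}\le\|v_{i}(k)-z^{*}\|^{2}-2\alpha_{k}\,d_{i}(k)^{\top}\!\left(v_{i}(k)-z^{*}\right)+\alpha_{k}^{2}\|d_{i}(k)\|^{2}.
\end{equation*}
Invoking the subgradient inequality $d_{i}(k)^{\top}(v_{i}(k)-z^{*})\ge f_{i}(v_{i}(k))-f_{i}(z^{*})$ and the uniform bound $\|d_{i}(k)\|\le L$ from the Same Constraint Set assumption, and summing over $i$, convexity of $\|\cdot\|^{2}$ together with double stochasticity (row and column sums equal to one) yields $\sum_{i}\|v_{i}(k)-z^{*}\|^{2}\le\sum_{j}\|z_{j}(k)-z^{*}\|^{2}$, which strips the weights from the leading term.

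Next I would handle the objective term. Writing $f_{i}(v_{i}(k))-f_{i}(z^{*})=\bigl(f_{i}(v_{i}(k))-f_{i}(\bar{z}(k))\bigr)+\bigl(f_{i}(\bar{z}(k))-f_{i}(z^{*})\bigr)$ and using the $L$-Lipschitz continuity of each $f_{i}$ on $Z$ to bound the first difference by $L\|v_{i}(k)-\bar{z}(k)\|$, while noting that each $z_{i}(k)\in Z$ and convexity of $Z$ force $\bar{z}(k)\in Z$, so that $\sum_{i}\bigl(f_{i}(\bar{z}(k))-f_{i}(z^{*})\bigr)=f(\bar{z}(k))-f(z^{*})\ge0$, I arrive at the key recursion
\begin{equation*}
\sum_{i}\|z_{i}(k+1)-z^{*}\|^{2}\le\sum_{i}\|z_{i}(k)-z^{*}\|^{2}-2\alpha_{k}\bigl(f(\bar{z}(k))-f(z^{*})\bigr)+2\alpha_{k}L\sum_{i}\|v_{i}(k)-\bar{z}(k)\|+mL^{2}\alpha_{k}^{2}.
\end{equation*}
Here the feasibility of $\bar{z}(k)$, which is exactly what the Same Constraint Set assumption buys, is what makes the objective term a genuine nonnegative decrease.

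The main obstacle is controlling the consensus (disagreement) term $\sum_{i}\|v_{i}(k)-\bar{z}(k)\|$ and showing it is compatible with the stepsize, namely that $\sum_{k}\alpha_{k}\sum_{i}\|v_{i}(k)-\bar{z}(k)\|<\infty$ and $\|z_{i}(k)-\bar{z}(k)\|\to0$. For this I would unroll the recursion in terms of the transition matrices $\Phi(k,s)=A(k)A(k-1)\cdots A(s)$ and use the standard estimate, valid under the Weights Rule, Connectivity, and Bounded Intercommunication assumptions, that $\bigl|[\Phi(k,s)]_{ij}-\tfrac{1}{m}\bigr|\le C\beta^{\,k-s}$ for constants $C>0$ and $\beta\in(0,1)$. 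Combining this geometric mixing with the fact that the subgradient-driven perturbations are bounded by $L\alpha_{k}$ and with the assumption $\sum_{k}\alpha_{k}^{2}<\infty$ gives the required summability; the Compactness assumption furnishes the boundedness of the iterates needed for the geometric series to converge. This is the technically heaviest step, and everything after it is fairly routine.

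Finally, with the consensus term summable, the key recursion has the form $u(k+1)\le u(k)-2\alpha_{k}\bigl(f(\bar{z}(k))-f(z^{*})\bigr)+w(k)$ with $u(k)=\sum_{i}\|z_{i}(k)-z^{*}\|^{2}\ge0$, a nonnegative decrease term, and $\sum_{k}w(k)<\infty$. A deterministic supermartingale convergence lemma then shows that $u(k)$ converges and that $\sum_{k}\alpha_{k}\bigl(f(\bar{z}(k))-f(z^{*})\bigr)<\infty$; since $\sum_{k}\alpha_{k}=\infty$, this forces $\liminf_{k}f(\bar{z}(k))=f(z^{*})$. Using compactness to extract a subsequence $\bar{z}(k_{\ell})\to\tilde{z}$ with $f(\tilde{z})=f(z^{*})$, closedness of $Z$ gives $\tilde{z}\in Z^{*}$; applying the convergence of $u(k)$ with $z^{*}$ replaced by $\tilde{z}$ and invoking $\|z_{i}(k)-\bar{z}(k)\|\to0$ then upgrades the subsequential limit to $\lim_{k\to\infty}\|z_{i}(k)-\tilde{z}\|=0$ for every $i$, which is the claim.
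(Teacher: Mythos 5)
The paper itself offers no proof of this proposition: it is quoted, with attribution, from the cited reference of Nedi\'{c} et al., so the only meaningful benchmark is that source's argument, which your sketch reconstructs faithfully in outline. The skeleton is sound: projection nonexpansiveness plus the subgradient inequality, double stochasticity to collapse $\sum_{i}\|v_{i}(k)-z^{*}\|^{2}$ into $\sum_{j}\|z_{j}(k)-z^{*}\|^{2}$, geometric mixing of the transition matrices $\Phi(k,s)$ to control the disagreement term, and a deterministic supermartingale-type lemma to extract convergence. This is exactly the route taken in the reference.

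There is, however, one genuine flaw: you invoke the Compactness assumption (Assumption 7), which is \emph{not} among the hypotheses of this proposition --- only Assumptions 2--6 are in force here; Assumption 7 belongs to the uniform-weights result, Proposition \ref{prop:UniWeiPro}. Fortunately, neither of your two uses of it is actually needed. For the disagreement estimate, boundedness of the iterates is not required: under the Same Constraint Set assumption each $v_{i}(k)\in Z$, so the per-step perturbation --- which, note, must include not only the subgradient step but also the projection error $\phi_{i}(k)=P_{Z}\left[v_{i}(k)-\alpha_{k}d_{i}(k)\right]-\left(v_{i}(k)-\alpha_{k}d_{i}(k)\right)$ that your sketch omits --- is bounded by $2L\alpha_{k}$, because $\|\phi_{i}(k)\|\le\alpha_{k}\|d_{i}(k)\|\le\alpha_{k}L$ precisely when $v_{i}(k)\in Z$; this is where Assumption 6 earns its keep in the mixing argument. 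The influence of the fixed initial points decays like $C\beta^{k}$, and summability $\sum_{k}\alpha_{k}\sum_{i}\|v_{i}(k)-\bar{z}(k)\|<\infty$ then follows from $\sum_{k}\alpha_{k}^{2}<\infty$ alone. For the subsequence extraction at the end, boundedness of $\{\bar{z}(k)\}$ comes for free from the supermartingale conclusion that $u(k)=\sum_{i}\|z_{i}(k)-z^{*}\|^{2}$ converges for the fixed $z^{*}$; a convergent subsequence with $f(\bar{z}(k_{\ell}))\to f(z^{*})$ therefore exists without any compactness of $Z$, and closedness of $Z$ plus continuity of $f$ identify its limit as optimal. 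With these two repairs --- dropping the appeal to Assumption 7 and accounting for the projection error in the perturbation bound --- your argument goes through exactly as in the cited source.
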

The next proposition presents convergence result for the projected
subgradient algorithm (\ref{eq:constrainedoptimizationalg0})-(\ref{eq:constrainedoptimizationalg})
in the uniform weight case.
\begin{proposition}
\label{prop:UniWeiPro}\citep{Nedic2010} Let Interior Point and Compactness
Assumptions hold (cf. Assumptions 1 and 7). Let $\left\{ z_{i}(k)\right\} $
be the iterates generated by the algorithm (\ref{eq:constrainedoptimizationalg0})-(\ref{eq:constrainedoptimizationalg})
with the weight vectors $a_{i}\left(k\right)=\left(1/m,\ldots,1/m\right)^{'}$
for all $i$ and $k$, and the stepsize satisfying $\sum_{k}\alpha_{k}=\infty$
and $\sum_{k}\alpha_{k}^{2}<\infty$. Then, the sequences $\left\{ z_{i}(k)\right\} $,
$i=1,\ldots,m$, converge to the same optimal point, i.e.

\begin{align*}
\underset{k\rightarrow\infty}{\lim} & \left\Vert z_{i}\left(k\right)-z^{*}\right\Vert =0\text{ for some }x^{*}\in X^{*}\text{ and all }i
\end{align*}

\end{proposition}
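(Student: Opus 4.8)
The plan is to exploit the special structure created by the uniform weights and then run a standard projected–subgradient argument on the running average. First I would observe that when $a_i(k)=(1/m,\ldots,1/m)'$ the mixing step collapses: $v_i(k)=\frac1m\sum_{j=1}^m z_j(k)=:\bar z(k)$ is the \emph{same} vector for every agent, so the iteration (\ref{eq:constrainedoptimizationalg0})--(\ref{eq:constrainedoptimizationalg}) reduces to $z_i(k+1)=P_{Z_i}[\bar z(k)-\alpha_k d_i(k)]$. This lets me track the single sequence $\bar z(k)$ and the disagreements $r_i(k):=\|z_i(k)-\bar z(k)\|$ separately. The Compactness assumption bounds all iterates by $B$ and, since each convex $f_i$ is finite on a neighborhood of the compact constraint sets, it yields a uniform subgradient bound $\|d_i(k)\|\le L$; it also guarantees that limit points exist.

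Next I would establish the basic per-agent relation. For any feasible $y\in Z=\cap_i Z_i$, non-expansiveness of the projection onto the closed convex set $Z_i$ (projecting $y\in Z_i$ returns $y$) together with the subgradient inequality for $f_i$ at $\bar z(k)$ gives
\[
\|z_i(k+1)-y\|^2\le\|\bar z(k)-y\|^2-2\alpha_k\big(f_i(\bar z(k))-f_i(y)\big)+\alpha_k^2 L^2 .
\]
Averaging over $i$, using convexity of $\|\cdot\|^2$ to pass from $\frac1m\sum_i\|z_i(k+1)-y\|^2$ to $\|\bar z(k+1)-y\|^2$, and recalling $f=\sum_i f_i$, I obtain the aggregate inequality
\[
\|\bar z(k+1)-y\|^2\le\|\bar z(k)-y\|^2-\tfrac{2\alpha_k}{m}\big(f(\bar z(k))-f(y)\big)+\alpha_k^2 L^2 .
\]

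The main obstacle is that $\bar z(k)$, being an average of points drawn from \emph{different} sets $Z_i$, need not lie in $Z$, so with $y=z^*\in Z^*$ the middle term $f(\bar z(k))-f^*$ can be negative and cannot simply be dropped. This is precisely where the Interior Point assumption enters, replacing the Same Constraint Set assumption used in Proposition \ref{prop:SameConPro}. My plan here is twofold. First, bound the infeasibility: since $z_i(k)\in Z_i$ we have $\mathrm{dist}(\bar z(k),Z_i)\le r_i(k)$, and non-emptiness of the interior of $Z$ supplies a bounded linear regularity constant $c$ with $\mathrm{dist}(x,Z)\le c\max_i\mathrm{dist}(x,Z_i)$ on the bounded region, so $\mathrm{dist}(\bar z(k),Z)\le c\max_i r_i(k)$; writing $\hat z(k)=P_Z[\bar z(k)]$ and using Lipschitz continuity of $f$ (constant $L_f$) then yields $f(\bar z(k))-f^*\ge f(\hat z(k))-f^*-L_f\,\mathrm{dist}(\bar z(k),Z)$, where $f(\hat z(k))-f^*\ge0$ is the genuine optimality gap at a feasible point. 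Second, control the disagreements themselves: the one-step estimate $\|z_i(k+1)-\bar z(k)\|\le\alpha_k L+\mathrm{dist}(\bar z(k),Z_i)$ exhibits the update as a simultaneous projection onto the $Z_i$ perturbed by subgradient steps of size $O(\alpha_k)$. The hard, technical part — and the crux of the whole proof — is to show $r_i(k)\to0$ together with $\sum_k\alpha_k\max_i r_i(k)<\infty$; this rests on bounded linear regularity driving the projection part linearly toward $Z$ while $\sum_k\alpha_k^2<\infty$ (hence $\alpha_k\to0$) keeps the perturbations summable.

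Finally I would combine everything. Substituting $y=z^*$ and the infeasibility correction into the aggregate inequality gives
\[
\|\bar z(k+1)-z^*\|^2\le\|\bar z(k)-z^*\|^2-\tfrac{2\alpha_k}{m}\big(f(\hat z(k))-f^*\big)+\alpha_k^2 L^2+\varepsilon_k ,
\]
with $f(\hat z(k))-f^*\ge0$ and $\sum_k\varepsilon_k<\infty$. Applying the deterministic supermartingale convergence lemma (a nonnegative sequence with summable positive increments and nonnegative subtracted terms converges, and those subtracted terms are summable) forces $\|\bar z(k)-z^*\|$ to converge and $\sum_k\alpha_k(f(\hat z(k))-f^*)<\infty$; since $\sum_k\alpha_k=\infty$ this gives $\liminf_k(f(\hat z(k))-f^*)=0$. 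Because $\hat z(k)$ is feasible and $\mathrm{dist}(\bar z(k),Z)\le c\max_i r_i(k)\to0$, the sequences $\bar z(k)$ and $\hat z(k)$ share the same limit points, all of which attain $f^*$ and hence lie in $Z^*$; convergence of the scalar sequence $\|\bar z(k)-z^*\|$ for that fixed optimal $z^*$ then pins the whole average to a single point of $Z^*$. The triangle inequality $\|z_i(k)-z^*\|\le r_i(k)+\|\bar z(k)-z^*\|\to0$ delivers the claim for every $i$.
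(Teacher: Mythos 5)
A preliminary caveat: the paper itself contains \emph{no} proof of this proposition — it is imported verbatim from \citep{Nedic2010} as background for the later QDP results (indeed, the ``Interior Point'' assumption it invokes is never even stated in the paper; Assumption 1 there is the network-topology assumption). So the only meaningful comparison is with the proof in the cited reference. Your outline follows essentially the same route as that source: uniform weights collapse every agent's pre-projection point to the common average $\bar z(k)$, a subgradient estimate gives an aggregate inequality for $\|\bar z(k)-y\|^2$, the interior-point condition supplies an error-bound (linear-regularity) constant relating $\mathrm{dist}(\bar z(k),Z)$ to $\max_i \mathrm{dist}(\bar z(k),Z_i)$, and a deterministic supermartingale argument finishes. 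You also correctly reconstructed what the unstated Interior Point assumption must be.

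The genuine gap sits exactly at the step you yourself flag as the crux: $r_i(k)\to 0$ and $\sum_k\alpha_k\max_i r_i(k)<\infty$ are asserted with a one-line heuristic, not proved, and the machinery you assembled cannot deliver them. Your per-agent relation uses plain non-expansiveness, $\|P_{Z_i}[v]-y\|\le\|v-y\|$, which discards the projection-error terms; with only that, the best recursion available for the disagreements is of the form $\max_i r_i(k+1)\le 2\alpha_k L+2\max_i r_i(k)$, which grows rather than contracts. The missing ingredient is the strengthened projection inequality $\|P_{Z_i}[v]-y\|^2\le\|v-y\|^2-\|P_{Z_i}[v]-v\|^2$ for $y\in Z_i$. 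Taking $y=P_Z[\bar z(k)]$, retaining the error terms $e_i(k)=z_i(k+1)-\left(\bar z(k)-\alpha_k d_i(k)\right)$, and lower-bounding $\max_i\|e_i(k)\|\ge\max_i\mathrm{dist}(\bar z(k),Z_i)-\alpha_k L\ge \mathrm{dist}(\bar z(k),Z)/c-\alpha_k L$ via your regularity constant yields a genuine contraction
\begin{align*}
\mathrm{dist}^2(\bar z(k+1),Z)\le q\,\mathrm{dist}^2(\bar z(k),Z)+C\alpha_k^2,\qquad q<1,
\end{align*}
whence $\sum_k\mathrm{dist}^2(\bar z(k),Z)<\infty$, $\mathrm{dist}(\bar z(k),Z)\to 0$, and, by Cauchy--Schwarz against $\sum_k\alpha_k^2<\infty$, the summability $\sum_k\alpha_k\,\mathrm{dist}(\bar z(k),Z)<\infty$; the elementary bound $r_i(k+1)\le 4\alpha_k L+2\,\mathrm{dist}(\bar z(k),Z)$ then gives precisely the two facts you needed. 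This contraction lemma is how \citep{Nedic2010} actually argues. With it inserted, the remainder of your argument (limit points, a fixed $z^*\in Z^*$, triangle inequality) is sound.
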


\subsection{Collaborative Multidisciplinary Design Optimization}

In this section, we present our results. We consider a distributed
multidisciplinary design optimization problem with quadratic objectives
and linear constraints. In a special case, we establish conditions
under which convergence of the algorithm (\ref{eq:alg2}) to optimial
solutions is guaranteed. For notational convenience, let $w$ denote 

\begin{eqnarray*}
w & = & \left[\begin{array}{c}
w_{x}\\
w_{y}\\
w_{z}
\end{array}\right]=\left[\begin{array}{c}
x\\
y\\
z
\end{array}\right]
\end{eqnarray*}

\begin{definition}
The design optimization problem of optimizing a quadratic function
of several variables, including (shared) design variables as well
as state and coupling variables, subject to linear constraints on
these variables is called Quadratic Design Programming (QDP). QDP
is a special type of multidisciplinary design optimization problem
and can be formulated as follow:
\end{definition}
\begin{eqnarray}
 &  & \underset{w_{z}}{\min}\quad w^{T}Qw+P^{T}w\nonumber \\
 &  & \quad s.t.\quad\;\, Aw\leq b\nonumber \\
 &  & \qquad\qquad Ew=d\label{eq:QDP}\\
 &  & \qquad\qquad y^{'}=Cw\nonumber \\
 &  & \qquad\qquad x^{'}=Xw\nonumber 
\end{eqnarray}

\begin{definition}
An special class of QDP problem can be defined as follow:

\begin{eqnarray}
 &  & \underset{z}{\min}\quad\overset{m}{\underset{i=1}{\sum}}c_{i}\left(k\right)\left(z_{i}\left(k\right)-y_{i}\left(k\right)-t_{i}\left(k\right)\right)^{2}\label{eq:SpecialQDP}\\
 &  & \qquad\qquad\overset{m}{\underset{r=1}{\sum}}\left(\lambda_{r}z_{r}\left(k\right)+\mu_{r}y_{r}\left(k\right)\right)\leq b\nonumber \\
 &  & \quad\text{s.t.}\quad\;\, y_{i}\left(k\right)=\overset{m}{\underset{r=1}{\sum}}b_{ir}\left(k\right)z_{r}\left(k\right),\quad i=1.\ldots,m\nonumber 
\end{eqnarray}

\end{definition}
The next proposition presents convergence result for algorithm (\ref{eq:alg2})
for design problem \ref{eq:SpecialQDP}. 
\begin{proposition}
\label{prop:MyPropos1} Let the sequence $\left\{ z_{i}(k)\right\} $
be generated by the collaborative design optimization algorithm (\ref{eq:alg2})
for the QDP problem (\ref{eq:SpecialQDP}) with the following weights,
for all $i$ and $j$

\begin{eqnarray*}
a_{ij}^{'}\left(k\right) & = & a_{ij}\left(k\right)-2\alpha_{i}(k)c_{j}\left(k\right)b_{j}^{'}\left(k\right)^{2}\\
 & b_{i}^{'} & \left(k\right)=1-\overset{m}{\underset{r=1}{\sum}}b_{ri}\left(k\right)
\end{eqnarray*}

Also, let Weights Rule, Doubly Stochasticity, Connectivity, and Information
Exchange Assumptions hold (cf. \textit{Assumptions} 2, 3, 4, and 5)
and, let the set $Z=\overset{m}{\underset{i=1}{\cap}}Z_{i}$ be nonempty.
Under Same Constraint Set Assumptions with the stepsize satisfying
$\sum_{k}\alpha_{k}=\infty$ and $\sum_{k}\alpha_{k}^{2}<\infty$.
Then, there exists an optimal point $z^{*}\in Z^{*}$ such that
\end{proposition}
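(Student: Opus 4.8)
The plan is to recognize the interleaved iteration (\ref{eq:alg2}) applied to the special QDP (\ref{eq:SpecialQDP}) as an instance of the distributed projected subgradient method (\ref{eq:constrainedoptimizationalg0})--(\ref{eq:constrainedoptimizationalg}), and then to invoke Proposition \ref{prop:SameConPro} verbatim. The first move is to eliminate the linking variables: since the coupling constraints $y_i(k)=\sum_{r=1}^m b_{ir}(k)z_r(k)$ are affine in the design estimates, substituting them into each local cost $f_i=c_i(z_i-y_i-t_i)^2$ yields a reduced objective $\tilde f_i(z)=c_i\big(\sum_{r}(\delta_{ir}-b_{ir})z_r-t_i\big)^2$ that is a convex quadratic in $z$ alone. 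The common feasible set is $Z=\cap_i Z_i$, assumed nonempty; under the Same Constraint Set Assumption (Assumption 6) each $\tilde f_i$ admits a subgradient $d_i(k)$ bounded by some $L$ on $Z$, which is exactly the bounded-subgradient hypothesis required downstream.

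The second move exploits that a quadratic has an affine gradient. Differentiating $\tilde f_i$ produces a $d_i(k)$ that is affine in the stacked estimates $z(k)$, and the key algebraic observation is the identity $\sum_r(\delta_{rj}-b_{rj})=b_j'$, so that when the coefficient of $z_j(k)$ is collected across the residuals it is governed precisely by the column quantity $b_j'(k)=1-\sum_{r}b_{rj}(k)$ appearing in the statement. Inserting this affine $d_i(k)$ into the bracket of (\ref{eq:alg2}) and grouping the terms multiplying each $z_j(k)$, the consensus mixing $\alpha_i(k)\sum_j a_{ij}(k)z_j(k)$ and the linear part of the gradient step merge into a single weighted combination $\sum_j a_{ij}'(k)z_j(k)$ with the stated effective weights $a_{ij}'(k)=a_{ij}(k)-2\alpha_i(k)c_j(k)b_j'(k)^2$, while the residual constant term (carrying the targets $t_i$) plays the role of the subgradient offset. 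This rewrites the interleaved update in the projected subgradient template $z_i(k+1)=P_{S_i}\big[\sum_j a_{ij}'(k)z_j(k)-\alpha_k\hat d_i(k)\big]$; collecting the full coefficient is routine once this identity is in hand.

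The last move, and the main obstacle, is to verify that the effective weight matrix $A'(k)=[a_{ij}'(k)]$ still meets the Weights Rule and Doubly Stochasticity assumptions so that Proposition \ref{prop:SameConPro} applies. Subtracting the gradient term can destroy both nonnegativity and the unit row and column sums, so I would impose an upper bound on the stepsize $\alpha_i(k)$, small enough that $a_{ij}'(k)\ge\eta>0$ is retained and the perturbation of the row and column sums is absorbed by the diagonal entries, and I would check that this perturbation is summable, consistent with $\sum_k\alpha_k^2<\infty$. The Connectivity and Information Exchange conditions on $(V,E_\infty)$ then carry over because, for small stepsizes, the support of $A'(k)$ coincides with that of $A(k)$; the Robbins--Monro conditions $\sum_k\alpha_k=\infty$ and $\sum_k\alpha_k^2<\infty$ and the nonemptiness of $Z^*$ are assumed. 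With all hypotheses of Proposition \ref{prop:SameConPro} in force for the reformulated iteration, its conclusion transfers directly, giving an optimal point $z^*\in Z^*$ with $\lim_{k\to\infty}\|z_i(k)-z^*\|=0$ for all $i$. The delicate bookkeeping is entirely in the stepsize restriction that simultaneously preserves double stochasticity and positivity of $A'(k)$ while keeping the gradient correction summable.
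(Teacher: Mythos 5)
Your proposal follows essentially the same route as the paper's proof: substitute the affine coupling $y_i=\sum_r b_{ir}(k)z_r(k)$ into the quadratic objective, compute the resulting affine gradient, fold the gradient step into the consensus mixing to obtain the effective weights $a_{ij}'(k)=a_{ij}(k)-2\alpha_i(k)c_j(k)b_j'(k)^2$ (with the $t_j$ terms as the residual offset), and conclude by invoking Proposition \ref{prop:SameConPro}. Your final step --- checking that the perturbed weight matrix $A'(k)$ still satisfies the Weights Rule and Doubly Stochasticity hypotheses under a suitable stepsize restriction --- is a point the paper's proof passes over silently, so your version is, if anything, more careful on the one place where the reduction-plus-citation argument is loosest.
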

\begin{align*}
\underset{k\rightarrow\infty}{\lim} & \left\Vert z_{i}\left(k\right)-z^{*}\right\Vert =0\text{ for all }i
\end{align*}

\begin{proof}
The proof idea is straightforward. It is to show that the collaborative
design optimization algorithm (\ref{eq:alg2}) for QDP problem reduces
to the distributed constrained optimization equation (\ref{eq:con_consensus}).
First, we calcualte the gradient of quadratic objective function:

\begin{eqnarray*}
d_{i}^{'}\left(k\right) & = & \nabla_{z}f\left(y,z\right)\\
 & = & \nabla_{z}\left[\overset{m}{\underset{i=1}{\sum}}c_{i}\left(k\right)\left(z_{i}\left(k\right)-y_{i}\left(k\right)-t_{i}\left(k\right)\right)^{2}\right]\\
 & = & \nabla_{z}\left[\overset{m}{\underset{i=1}{\sum}}c_{i}\left(k\right)\left(b_{i}^{'}\left(k\right)z_{i}\left(k\right)-t_{i}\left(k\right)\right)^{2}\right]\\
 & = & \nabla_{z}\left[\overset{m}{\underset{i=1}{\sum}}c_{i}\left(k\right)\left(b_{i}^{'}\left(k\right)z_{i}\left(k\right)-t_{i}\left(k\right)\right)^{2}\right]\\
 & = & \overset{m}{\underset{i=1}{\sum}}c_{i}\left(k\right)\nabla_{z}\left(b_{i}^{'}\left(k\right)z_{i}\left(k\right)-t_{i}\left(k\right)\right)^{2}\\
 & = & \overset{m}{\underset{i=1}{\sum}}2c_{i}\left(k\right)b_{i}^{'}\left(k\right)\left(b_{i}^{'}\left(k\right)z_{i}\left(k\right)-t_{i}\left(k\right)\right)
\end{eqnarray*}

where

\begin{align*}
b_{i}^{'}\left(k\right) & =1-\overset{m}{\underset{r=1}{\sum}}b_{ri}\left(k\right)
\end{align*}

By substituting in equation (\ref{eq:alg2})

\begin{multline*}
z_{i}\left(k+1\right)=P_{S_{i}}\left[\overset{m}{\underset{j=1}{\sum}}a_{ij}\left(k\right)z_{j}\left(k\right)-\alpha_{i}\left(k\right)d_{i}^{'}\left(k\right)\right]\\
=P_{S_{i}}\left[\overset{m}{\underset{j=1}{\sum}}a_{ij}\left(k\right)z_{j}\left(k\right)\right.\cdots\\
\cdots\left.-\alpha_{i}\left(k\right)\overset{m}{\underset{i=1}{\sum}}2c_{i}\left(k\right)b_{i}^{'}\left(k\right)\left(b_{i}^{'}\left(k\right)z_{i}\left(k\right)-t_{i}\left(k\right)\right)\right]\\
=P_{S_{i}}\left[\overset{m}{\underset{j=1}{\sum}}\left(\left(a_{ij}\left(k\right)-2\alpha_{i}(k)c_{j}\left(k\right)b_{j}^{'}\left(k\right)^{2}\right)z_{j}(k)\right.\cdots\right.\\
\left.\cdots\left.-\left(2\alpha_{i}(k)c_{j}\left(k\right)b_{j}^{'}\left(k\right)^{2}\right)t_{j}\left(k\right)\right)\right]
\end{multline*}

we conclude 

\begin{align*}
z_{i}\left(k+1\right) & =P_{S_{i}}\left[\overset{m}{\underset{j=1}{\sum}}\left(a_{ij}^{'}\left(k\right)z_{j}\left(k\right)-\beta_{ij}\left(k\right)t_{j}\left(k\right)\right)\right]
\end{align*}

where

\begin{align*}
a_{ij}^{'}\left(k\right) & =a_{ij}\left(k\right)-2\alpha_{i}(k)c_{j}\left(k\right)b_{j}^{'}\left(k\right)^{2}\\
 & \beta_{ij}\left(k\right)=2\alpha_{i}(k)c_{j}\left(k\right)b_{j}^{'}\left(k\right)^{2}
\end{align*}

Afterwards, the results are concluded by using proposition (\ref{prop:SameConPro}).\end{proof}
\begin{remark}
Every distributed design optimization problem without design variables
updates (i.e., $\alpha_{i}(k)=0$) reduces to a distributed average
consensus problem. 
\end{remark}

\begin{remark}
It is possible to guarantee convergence of a distributed design optimization
problem both through sharing appropriate design variables (i.e., $a_{ij}\left(k\right)$)
as well as coupling variables (i.e., $b_{j}^{'}\left(k\right)$).\end{remark}
\begin{proposition}
\label{prop:MyProPos2}Let the sequence $\left\{ z_{i}(k)\right\} $
be generated by the collaborative design optimization algorithm (\ref{eq:alg2})
for the QDP problem (\ref{eq:SpecialQDP}) with the following weights,
for all $i$ and $j$

\begin{align*}
a_{ij}^{'}\left(k\right) & =a_{ij}\left(k\right)-2\alpha_{i}(k)c_{j}\left(k\right)b_{j}^{'}\left(k\right)^{2}\\
 & b_{i}^{'}\left(k\right)=1-\overset{m}{\underset{r=1}{\sum}}b_{ri}\left(k\right)
\end{align*}

Also, let Weights Rule, Doubly Stochasticity, Connectivity, and Information
Exchange Assumptions hold (cf. \textit{Assumptions} 2, 3, 4, and 5)
and, let the set $Z=\overset{m}{\underset{i=1}{\cap}}Z_{i}$ be nonempty.
Let the weight vectors 
\begin{align*}
a_{i}\left(k\right) & =\left(1/m,\ldots,1/m\right)^{'}\text{ for all \ensuremath{i}and \ensuremath{k}}
\end{align*}
and the stepsize satisfying $\sum_{k}\alpha_{k}=\infty$ and $\sum_{k}\alpha_{k}^{2}<\infty$.
Then, the sequences $\left\{ z_{i}(k)\right\} $, $i=1,\ldots,m$,
converge to the same optimal point, i.e.

\begin{align*}
\underset{k\rightarrow\infty}{\lim} & \left\Vert z_{i}\left(k\right)-z^{*}\right\Vert =0\text{ for some }z^{*}\in Z^{*}\text{ and all }i
\end{align*}
\end{proposition}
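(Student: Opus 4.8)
The plan is to mirror the reduction carried out in the proof of Proposition~\ref{prop:MyPropos1} and then, in the concluding step, to invoke the uniform-weight result of Proposition~\ref{prop:UniWeiPro} in place of the same-constraint-set result of Proposition~\ref{prop:SameConPro}. The first step is to eliminate the coupling variables: using the linking constraint $y_i(k)=\sum_{r=1}^{m}b_{ir}(k)z_r(k)$ from (\ref{eq:SpecialQDP}), I would rewrite each summand of the objective purely in terms of the design variables, which introduces the scalar $b_i'(k)=1-\sum_{r=1}^{m}b_{ri}(k)$, and then compute the gradient $d_i'(k)=\nabla_z f$ exactly as in Proposition~\ref{prop:MyPropos1}, obtaining $d_i'(k)=\sum_{i=1}^{m}2c_i(k)b_i'(k)(b_i'(k)z_i(k)-t_i(k))$.

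The second step is to substitute this gradient into the interleaved update (\ref{eq:alg2}). The crucial difference from Proposition~\ref{prop:MyPropos1} is that here the weight vectors are fixed to $a_i(k)=(1/m,\ldots,1/m)'$, so rather than folding the subgradient correction into a modified weight matrix $a_{ij}'(k)$ I would keep the update in the split form $z_i(k+1)=P_{S_i}[\tfrac{1}{m}\sum_{j=1}^{m}z_j(k)-\alpha_i(k)d_i'(k)]$. This is precisely the distributed projected subgradient algorithm (\ref{eq:constrainedoptimizationalg0})--(\ref{eq:constrainedoptimizationalg}) specialized to uniform weights, with $d_i'(k)$ serving as a subgradient of the convex quadratic local objective at the averaged iterate.

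The final step is to verify that the hypotheses of Proposition~\ref{prop:UniWeiPro} are met and then apply it directly. Since the stepsize is assumed to satisfy $\sum_k\alpha_k=\infty$ and $\sum_k\alpha_k^2<\infty$ and the averaging weights are uniform, it remains only to confirm the Interior Point and Compactness Assumptions required by Proposition~\ref{prop:UniWeiPro} (cf. Assumptions~1 and 7), in particular the uniform boundedness of the subgradients over the constraint sets. The main obstacle lies exactly here: because the stated hypotheses of this proposition are the consensus-type assumptions rather than compactness, one must confirm that over the compact sets $S_i$ the QDP data $c_i(k)$, $b_i'(k)$, $t_i(k)$ produce a gradient $d_i'(k)$ with $\|d_i'(k)\|\le L$ uniformly in $k$, which is where the linear-quadratic structure of (\ref{eq:SpecialQDP}) must be exploited to control the subgradient norm. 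Once this boundedness is established, Proposition~\ref{prop:UniWeiPro} yields a common $z^*\in Z^*$ with $\lim_{k\to\infty}\|z_i(k)-z^*\|=0$ for all $i$, completing the argument.
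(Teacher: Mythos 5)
Your proposal takes essentially the same route as the paper: the paper's own proof consists of the single line ``Same as Proposition~\ref{prop:MyPropos1}'', meaning the identical elimination of the coupling variables, gradient computation, and reduction of (\ref{eq:alg2}) to the distributed projected subgradient iteration, followed by an appeal to the uniform-weight result (Proposition~\ref{prop:UniWeiPro}) in place of Proposition~\ref{prop:SameConPro}, which is exactly what you do. Your further observation that the Interior Point and Compactness hypotheses required by Proposition~\ref{prop:UniWeiPro} do not appear among the proposition's stated assumptions, so that a uniform bound $\left\Vert d_{i}^{'}\left(k\right)\right\Vert \leq L$ must be extracted from the linear-quadratic structure of (\ref{eq:SpecialQDP}), identifies a gap that the paper itself silently passes over.
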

\begin{proof}
Same as proposition (\ref{prop:MyPropos1}).\end{proof}
\begin{proposition}
\label{prop:MyProPos3}Let the sequence $\left\{ z_{i}(k)\right\} $
be generated by the collaborative design optimization algorithm (\ref{eq:alg2})
for the QDP problem (\ref{eq:SpecialQDP}) with $t_{i}\left(k\right)=0$
and the following weights, for all $i$ and $j$

\begin{align*}
a_{ij}^{'}\left(k\right) & =a_{ij}\left(k\right)-2\alpha_{i}(k)c_{j}\left(k\right)b_{j}^{'}\left(k\right)^{2}\\
 & b_{i}^{'}\left(k\right)=1-\overset{m}{\underset{r=1}{\sum}}b_{ri}\left(k\right)
\end{align*}

Also, let Weights Rule, Doubly Stochasticity, Connectivity, and Information
Exchange Assumptions hold (cf. \textit{Assumptions} 2, 3, 4, and 5)
and, let the set $Z=\overset{m}{\underset{i=1}{\cap}}Z_{i}$ be nonempty.
We then have for some $\tilde{z}\in Z$ and all $i$,\\
\begin{eqnarray*}
\underset{k\rightarrow\infty}{\lim}\left\Vert z_{i}\left(k\right)-\tilde{z}\right\Vert  & = & 0
\end{eqnarray*}
\end{proposition}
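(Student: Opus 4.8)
The plan is to specialize the reduction already carried out in Proposition~\ref{prop:MyPropos1} to the case $t_i(k)=0$ and then close the argument with the consensus result rather than the subgradient optimization result. First I would recompute the gradient of the objective in (\ref{eq:SpecialQDP}) exactly as in the proof of Proposition~\ref{prop:MyPropos1}, but now setting $t_i(k)=0$. Because the offset term then disappears, the gradient collapses to the homogeneous expression
\[
d_i'(k)=\sum_{i=1}^m 2c_i(k)\,b_i'(k)^2\,z_i(k),
\]
with no additive constant left over.

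Next I would substitute this into the update (\ref{eq:alg2}). Since the $t_j(k)$-offset that survived in Proposition~\ref{prop:MyPropos1} is now absent, the $\beta_{ij}(k)t_j(k)$ contribution vanishes and the iteration reduces cleanly to
\[
z_i(k+1)=P_{S_i}\!\left[\sum_{j=1}^m a_{ij}'(k)\,z_j(k)\right],
\qquad a_{ij}'(k)=a_{ij}(k)-2\alpha_i(k)c_j(k)b_j'(k)^2 .
\]
The point to stress is that this is \emph{exactly} the constrained consensus algorithm (\ref{eq:con_consensus}), with the modified weight matrix $[a_{ij}'(k)]$ in the role of the weights and the local feasible sets $S_i$ in the role of the constraint sets $Z_i$. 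This is precisely where $t_i(k)=0$ does its work: in Proposition~\ref{prop:MyPropos1} the residual offset made the scheme a genuine projected subgradient method, forcing an appeal to Proposition~\ref{prop:SameConPro}, whereas here the complete absence of any additive term leaves a pure projected weighted average, so the natural tool is the consensus Proposition~\ref{prop:pro-conscon}. Applying that proposition directly, under the Weights Rule, Doubly Stochasticity, Connectivity and Information Exchange assumptions together with nonemptiness of $Z=\bigcap_{i=1}^m Z_i$, yields a common $\tilde z\in Z$ with $\lim_{k\to\infty}\|z_i(k)-\tilde z\|=0$ for every $i$, which is the claimed conclusion.

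The hard part will be reconciling the modified weights $a_{ij}'(k)$ with the hypotheses that Proposition~\ref{prop:pro-conscon} imposes on the weight matrix. The correction $-2\alpha_i(k)c_j(k)b_j'(k)^2$ changes the row sums to $1-2\alpha_i(k)\sum_j c_j(k)b_j'(k)^2$ and may push individual entries negative or below the uniform lower bound $\eta$, so $[a_{ij}'(k)]$ does not inherit the (doubly) stochastic structure of $[a_{ij}(k)]$ for free. A rigorous argument must therefore either impose explicit conditions on the stepsizes $\alpha_i(k)$ and the coefficients $c_j(k),b_j'(k)$ (or renormalize the rows) so that $[a_{ij}'(k)]$ stays nonnegative, bounded below by some $\eta>0$, and (doubly) stochastic for every $k$, or else treat the discrepancy as a vanishing perturbation by exploiting $\alpha_k\to 0$. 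It is this verification, rather than the algebraic reduction, that carries the real content of the proof.
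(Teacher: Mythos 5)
Your proposal follows essentially the same route as the paper: the paper's own proof is a one-line reference stating that, when $t_i(k)=0$, the algorithm (\ref{eq:alg2}) reduces via the same algebra as Proposition~\ref{prop:MyPropos1} to a projected consensus iteration, after which the constrained consensus result (Proposition~\ref{prop:pro-conscon}) gives the claimed limit --- which is exactly your specialization argument. The weight-compatibility issue you flag at the end (that $a_{ij}'(k)$ need not stay nonnegative, bounded below by $\eta$, or doubly stochastic) is a genuine gap, but it is one the paper's proof silently shares rather than resolves, so your write-up is, if anything, more careful than the original.
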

\begin{proof}
Same as proposition (\ref{prop:MyPropos1}). The proof idea is to
show that the collaborative design optimization algorithm (\ref{eq:alg2})
for QDP problem reduces to the projected consensus algorithm (\ref{eq:constrainedoptimizationalg0})-(\ref{eq:constrainedoptimizationalg})
when $t_{i}\left(k\right)=0$. \end{proof}
\begin{remark}
Discussion of another proof of propositions (\ref{prop:MyPropos1}),
(\ref{prop:MyProPos2}), and (\ref{prop:MyProPos3}), provided by
expanding the projection onto hyperplanes, omitted here due to space
limitations (can be found in \citep{Noori2012}). 
\end{remark}
Another special case that may be of interest is Quasi-seperable problems,
(i.e., the problems where coupling objectives and constraints are
not present). Further discussions of the different special cases will
be omitted here in order to reserve more space and time for the discussion
of the general problem. The interested reader is referred to \citep{Noori2012}.

\section{PRELIMINARY NUMERICAL RESULTS}

The coupling among subspaces and state transitions are the main difference
between distributed optimization and design optimization problems
(see Figure \ref{fig:Flow-of-information;}). In fact, guaranteed
convergence of the general distributed design optimization problem
in the presence of coupling dynamics is difficult. In special cases,
it can be shown that this problem can be reduced to the distributed
constrained optimization problem. In the following example, we consider
an special class of QDP problem defined in (\ref{eq:SpecialQDP}).
\begin{example}
Let's consider the following design optimization problem P which is
composed of four subproblems $P_{1}$,$P_{2}$,$P_{3}$ and $P_{4}$.
The objectives contains both local and shared design variables, linked
through coupling variables. For compliance with the definition of
QDP problem, subproblems are defined according to (\ref{eq:QDP}).
The structure of the problem is illustrated in Figure \ref{fig:Example-1}.

\begin{figure}
\begin{centering}
\includegraphics[scale=0.28]{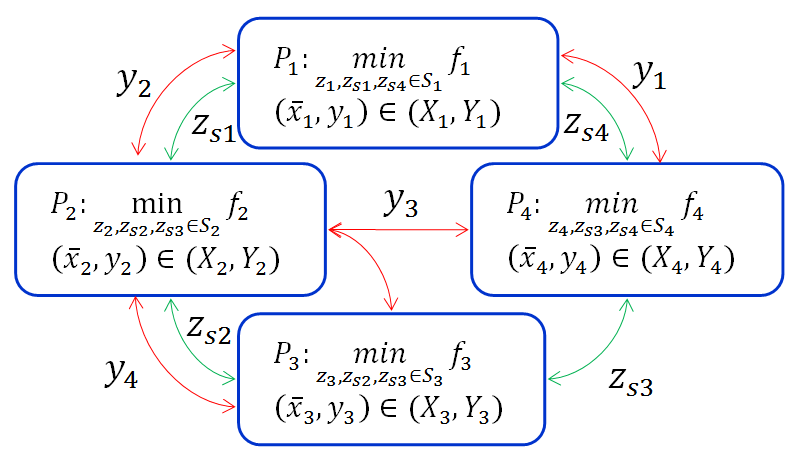}
\par\end{centering}

\caption{\label{fig:Example-1}Example 1}
\end{figure}

\begin{eqnarray*}
P_{1}: &  & \underset{z_{1},z_{s1},z_{s4}}{\min}\quad f_{1}=\left(z_{1}\left(t\right)-y_{2}\left(t\right)\right)^{2}+\left(z_{s1}\left(t\right)-10\right)^{2}\\
 &  & \qquad\qquad\qquad+\left(z_{s4}\left(t\right)-10\right)^{2}\\
 &  & \quad s.t.\quad\;\,\, z_{1}\left(t\right)+z_{s1}\left(t\right)-z_{s4}\left(t\right)\leq1\\
 &  & \qquad\qquad x_{1}\left(t\right)-z_{3}\left(t\right)-z_{4}\left(t\right)=0\\
 &  & \qquad\qquad y_{1}\left(t\right)=x_{1}\left(t\right)
\end{eqnarray*}

\begin{eqnarray*}
P_{2}: &  & \underset{z_{2},z_{s1},z_{s2}}{\min}\quad f_{2}=\left(z_{2}\left(t\right)+y_{1}\left(t\right)\right)^{2}+\left(z_{s1}\left(t\right)-4\right)^{2}\\
 &  & \qquad\qquad\qquad+\left(z_{s2}\left(t\right)-4\right)^{2}\\
 &  & \quad s.t.\quad\;\, z_{2}\left(t\right)-z_{s1}\left(t\right)-z_{s2}\left(t\right)\leq1\\
 &  & \qquad\qquad x_{2}\left(t\right)-z_{2}\left(t\right)=0\\
 &  & \qquad\qquad y_{2}\left(t\right)=x_{2}\left(t\right)
\end{eqnarray*}

\begin{eqnarray*}
P_{3}: &  & \underset{z_{3},z_{s2},z_{s3}}{\min}\quad f_{3}=\left(z_{3}\left(t\right)-y_{4}\left(t\right)\right)^{2}+\left(z_{s2}\left(t\right)+2\right)^{2}\\
 &  & \qquad\qquad\qquad+\left(z_{s3}\left(t\right)-5\right)^{2}\\
 &  & \quad s.t.\quad\;\,\, z_{3}\left(t\right)+z_{s2}\left(t\right)+z_{s3}\left(t\right)\geq-1\\
 &  & \qquad\qquad x_{3}\left(t\right)-z_{s2}\left(t\right)=0\\
 &  & \qquad\qquad y_{3}\left(t\right)=x_{3}\left(t\right)
\end{eqnarray*}

\begin{eqnarray*}
P_{4}: &  & \underset{z_{4},z_{s3},z_{s4}}{\min}\quad f_{4}=\left(z_{4}\left(t\right)+y_{3}\left(t\right)\right)^{2}+\left(z_{s3}\left(t\right)+10\right)^{2}\\
 &  & \qquad\qquad\qquad+\left(z_{s4}\left(t\right)+10\right)^{2}\\
 &  & \quad s.t.\quad\;\,\, z_{4}\left(t\right)-z_{s3}\left(t\right)+z_{s4}\left(t\right)\geq-1\\
 &  & \qquad\qquad x_{4}\left(t\right)-z_{s1}\left(t\right)=0\\
 &  & \qquad\qquad y_{4}\left(t\right)=x_{4}\left(t\right)
\end{eqnarray*}

Each quadratic objective function are constrained on a one-dimensional
locus (i.e., a line) and within a half-space with a hyperplane. The
early implementation of the algorithm is done on MATLAB$^{\lyxmathsym{\textregistered}}$
\citep{MathWorks2010}, in a 32-bit environment. The testbed environment
consists of 4 Intel Pentium IV. 2,5GHz and 1GB RAM workstations running
Microsoft Windows XP Professional. The PCs are interconnected by a
100Mbit Ethernet LAN setted-up as a single collision domain. 

The test results are presented in table \ref{tab:Results}. The figure
\ref{fig:Results} shows the performance of the algorithm that uses
the following settings; optimization coefficient$\alpha_{opt}=\frac{0.1}{m}$,
consensus coefficient$\alpha_{con}=\frac{0.1}{m}$, and iteration
$n=10000$. As depicted in Figure \ref{fig:Results}, while convergence
to the optimal solution is quickly achieved (about 100 iterations)
but approaching to accurate result is slow (10000 iterations). This
feature of the algorithm could be improved in several ways; amongst
them are proper network weight design or proper selection of optimization
and consensus factors. 

\begin{table}
\begin{centering}
\caption{\label{tab:Results}Comparision of simulation results}

\par\end{centering}

\centering{}%
\begin{tabular}{ccc}
\hline 
Var. & AIO method & CMDO method\tabularnewline
\hline 
\hline 
$z_{1}$ & -6.000 & -5.999\tabularnewline
$z_{2}$ & -6.000 & -6.000\tabularnewline
$z_{3}$ & 7.000 & 7.000\tabularnewline
$z_{4}$ & -0.999 & -1.000\tabularnewline
$z_{s1}$ & 7.000 & 7.000\tabularnewline
$z_{s2}$ & 0.999 & 1.000\tabularnewline
$z_{s3}$ & -2.499 & -2.499\tabularnewline
$z_{s4}$ & 0.000 & 0.000\tabularnewline
\hline 
\end{tabular}
\end{table}

\begin{figure}
\begin{centering}
\includegraphics[scale=0.22]{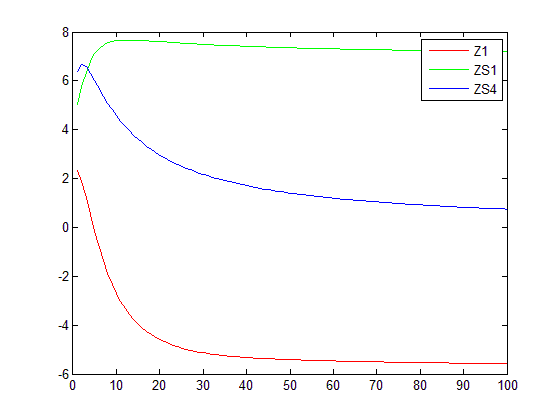}\includegraphics[scale=0.22]{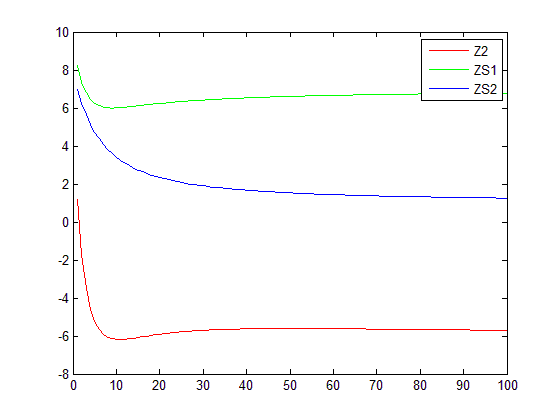}
\par\end{centering}

\begin{centering}
\includegraphics[scale=0.22]{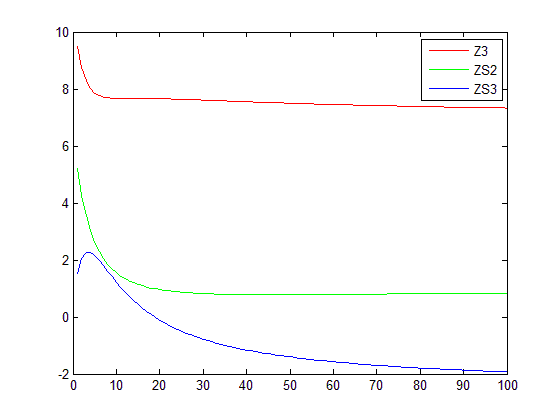}\includegraphics[scale=0.22]{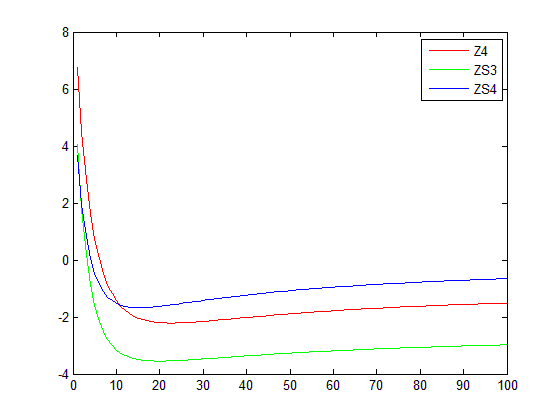}
\par\end{centering}

\caption{\label{fig:Results}Convergence of the solutions }

\end{figure}

\end{example}

\section{CONCLUSION }

In this paper, a distributed computation framework for multi-objective
multidisciplinary design optimization problems is proposed. The corresponding
coordination strategy is collaborative and concurrent, which make
it suitable for real-world design problems. It is also shown that
distributed constrained optimization problem is an special case of
collaborative multidisciplinary design optimization problem. We also
investigate an important class of design optimization problems, called
QDP problem. By using existing results, we established convergence
of an special case of QDP problem. Finally, the paper highlights challenging
areas in which research is required to allow us to utilize the full
potential of distributed optimization methods in multidisciplinary
design optimization in the future. 

\bibliographystyle{plainnat}
\bibliography{MyBibDataBase}

\end{document}